\newif\iffigs
\newcommand{\fH}{\mathfrak{H}}
\newcommand{\sC}{\mathscr{C}}
\newcommand{\sR}{\mathscr{R}}
\newcommand{\sL}{\mathscr{L}}
\newcommand{\cA}{{\mathscr{A}}}
\newcommand{\cS}{{\mathscr{S}}}
\newcommand{\sT}{{\mathscr{T}}}
\newcommand{\T}{{\mathscr{T}}}
\newcommand{\He}{{\mathscr{H}}}
\newcommand{\D}{{\mathscr F}}
\newcommand{\R}{\mathbb{R}}
\newcommand{\Z}{\mathbb{Z}}
\newcommand{\NN}{\mathbb{N}}
\newcommand{\ttL}{\mathtt{L}}
\newcommand{\Reduced}{\mathtt{Red}}
\newcommand{\join}[2]{{#1\!\!\relbar\protect\joinrel\protect\relbar\!\!#2}}
\renewcommand{\tilde}[1]{\widetilde{#1}}
\renewcommand{\emptyset}{\varnothing}
\renewcommand{\bar}[1]{\overline{#1}}
\newcommand{\thmcite}[1]{\emph{\cite{#1}}}
\newcommand{\rfthmcite}[2]{\emph{\cite[#1]{#2}}}
\DeclareMathOperator{\Cay}{Cay}
\theoremstyle{plain}
\newtheorem{theorem}[subsection]{Theorem}
\newtheorem{conjecture}[subsection]{Conjecture}
\newtheorem{proposition}[subsection]{Proposition}
\newtheorem{corollary}[subsection]{Corollary}
\theoremstyle{definition}
\newtheorem{definition}[subsection]{Definition}
\newtheorem{example}[subsection]{Example}
\theoremstyle{remark}
\newtheorem{remark}[subsection]{Remark}
\date{\today}
\title[Cells and automata]{Kazhdan--Lusztig Cells in planar hyperbolic Coxeter groups and automata}
\author[M. V. Belolipetsky]{Mikhail V. Belolipetsky}
\address{
IMPA\\
Estrada Dona Castorina 110\\
22460-320 Rio de Janeiro, Brazil}
\email{mbel@impa.br}
\author[P. E. Gunnells]{Paul E. Gunnells} 
\address{University of
Massachusetts Amherst\\ Amherst, MA 01003}
\email{gunnells@math.umass.edu}
\author[R. Scott]{Richard Scott}
\address{Department of Mathematics and Computer Science\\
Santa Clara University\\
Santa Clara, CA  95053}
\email{rscott@scu.edu}
\subjclass[2000]{Primary 20F10, 20F55}
\keywords{Kazhdan--Lusztig cells, hyperbolic groups, finite state automata}
\thanks{MB was partially supported by CNPq. PG was partially supported
by NSF.  We thank an anonymous referee for helpful comments}
\begin{document}
\begin{abstract}
Let $C$ be a one- or two-sided Kazhdan--Lusztig cell in a Coxeter
group $(W,S)$, and let $\Reduced (C)$ be the set of reduced
expressions of all $w\in C$, regarded as a language over the alphabet
$S$.  Casselman has conjectured that $\Reduced (C)$ is regular.  In
this paper we give a conjectural description of the cells when $W$ is
the group corresponding to a hyperbolic polygon, and show that our
conjectures imply Casselman's.
\end{abstract}

\maketitle 

\section{Introduction}\label{s:intro}

Let $W$ be a Coxeter group with generating set $S$.  In their study of
representations of Coxeter groups and Hecke algebras, Kazhdan and
Lusztig introduced the decomposition of $W$ into \emph{cells}
\cite{kl}.  The cells are equivalence classes in $W$ determined by the
left and right descent sets of elements of $W$ and the degrees of the
Kazhdan--Lusztig polynomials $P_{x,y}$ (\S\ref{s:defs}).  Today cells
are known to have many applications in representation theory; for some
references, see the bibliography of \cite{notices}.

This paper addresses the computability of the cells, in the following
sense.  Given a cell $C$, one can ask for an efficient way to encode
its elements.  Since elements of $W$ are easily represented by reduced
expressions in the generators $S$, it is natural to ask for a solution
in terms of such expressions.  However, since the definition of the
cells involves a complicated equivalence relation, it is certainly not
clear that this is possible.

Despite this, W.~Casselman has conjectured that cells can be
efficiently encoded.  To state his conjecture, we need some
terminology from the theory of formal languages; for more information
see \cite{aho}.

Let $A$ be a finite alphabet of characters.  By a \emph{language}
$\ttL$ over $A$ we mean a collection of finite-length ordered words
built from elements of $A$.  A \emph{finite state automaton} $\cA$
with alphabet $A$ is a finite directed graph on a vertex set $\cS $,
called states, with edges labeled by elements of $A \cup \{\varepsilon
\}$.  Different edges leaving a given vertex are assumed to have
different labels.  One vertex is defined to be the \emph{initial
state}; a subset of $\cS$ is chosen and defined to be the
\emph{accepting states}.  A finite state automaton encodes certain
words built from $A$ through path traversal: one starts at the initial
state and follows a directed path of any length that terminates at an
accepting state.  As the path is traversed the vertex labels are
concatenated into a word (the symbol $\varepsilon$ represents a
``null-transition;'' the word is unaltered if $\varepsilon$ is read).
The collection of words that can be so constructed forms the language
recognized by $\cA$.  A language is called \emph{regular} if it can be
recognized by a finite state automaton.

Regular languages are the simplest infinite languages one encounters
in the hierarchy of formal languages.  Many languages in algebra are
regular.  For instance, via an earlier paper of Davis--Shapiro
\cite{ds}, work of Brink--Howlett implies that the language $\Reduced
(W)$ of all reduced expressions in the generators $S$ is regular
\cite{bh}.  Any cell $C$ induces a sub language $\Reduced (C)\subset
\Reduced (W)$, namely all the reduced expressions of elements in $C$.
We can now state Casselman's conjecture:
\begin{conjecture}\label{conj:casselman}
For any Coxeter group $W$ and any (two- or one-sided) cell $C\subset
W$, the language $\Reduced (C)$ is regular.
\end{conjecture}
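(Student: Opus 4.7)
The plan is to attack Conjecture \ref{conj:casselman} by producing, for each cell $C$, an explicit finite state automaton that recognizes $\Reduced(C)$. The starting point is the Brink--Howlett automaton $\cA_W$ recognizing $\Reduced(W)$ cited above. Since intersection and restriction operations preserve regularity, it suffices to produce a \emph{finite state predicate} on reduced words that detects membership of the element they spell in $C$; equivalently, one would refine $\cA_W$ by labeling states with additional data so that after reading $s_{i_1}\cdots s_{i_k}$ the current state records enough information to decide whether the product lies in $C$, and then declare accepting exactly those states whose label places the word in $C$.

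The central question is what finite data suffice. One-sided and two-sided cells are defined through Kazhdan--Lusztig polynomials and the preorders $\leq_L$, $\leq_R$, $\leq_{LR}$, and tracking these directly seems hopeless. I would instead seek a \emph{geometric} characterization of cells. In the setting of this paper, where $W$ is attached to a hyperbolic polygon, $W$ acts on the hyperbolic plane with polygonal fundamental domain, and each reduced expression for $w$ traces out a minimal gallery of polygons ending at $w\cdot P$. The working conjecture to aim for is that the cell containing $w$ is detectable from bounded-radius combinatorial data near the terminal polygon of the gallery, possibly together with coarse asymptotic information about which ``sector'' or family of parallel walls the gallery is heading into. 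Any such local-plus-asymptotic description translates directly into states of a finite state machine sitting on top of $\cA_W$, yielding the desired regularity.

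Concretely I would proceed in three steps: (i) compute cells in small examples (hyperbolic triangle and quadrilateral groups) by direct Kazhdan--Lusztig calculation, and inspect the resulting reduced expressions for patterns compatible with the tessellation; (ii) formulate a precise conjectural description of each cell in terms of the polygonal geometry; (iii) verify that the proposed description is recognizable by augmenting $\cA_W$ with the appropriate finite bookkeeping (descent sets, records of which walls have been crossed, and so on). The main obstacle will be step (ii): guessing the correct description of the cells. Cells resist closed-form description even in affine type, and for hyperbolic groups essentially nothing has been proved beyond rank three. The hope is that the strong separation and tree-like behavior of walls in $\mathbb{H}^2$ on large scales forces cells into shapes that are visibly finite state, and this geometric rigidity is where the key structural insight will have to come from.
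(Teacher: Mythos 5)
Your proposal is a research plan rather than a proof, and it should be said up front that the paper does not prove Conjecture \ref{conj:casselman} either: it proves only the conditional statement that the conjectural cell decomposition of \S\ref{s:conjs} implies regularity for hyperbolic polygon groups. Measured against that conditional result, your outline identifies the right high-level strategy (find a combinatorial/geometric description of the cells, then show that description is finite-state recognizable), but it is missing both of the ingredients that make the paper's argument work. First, you never supply the description of the cells that your step (ii) calls for. The paper's answer is quite specific: the two-sided cells are conjecturally the sets $C_i$ of elements having \emph{some} reduced expression containing the longest element $w_D$ of a finite dihedral parabolic of exponent $e_i$ as a consecutive subword, and no such expression for any larger exponent; the one-sided cells are translates $w\cdot U^T$ of pieces of the descent-class regions $W^T$. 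This is not ``bounded-radius data near the terminal polygon plus asymptotic sector information'' --- it is a global subword condition quantified over all reduced expressions of $w$, and that difference matters for the second point.

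Second, and more seriously, your proposed mechanism --- refining the Brink--Howlett automaton $\cA_W$ with extra state labels so that reading one reduced word decides membership --- cannot by itself handle an existential quantifier over \emph{all} reduced expressions of the element being spelled. An automaton reading $\alpha$ sees only $\alpha$; whether some \emph{other} reduced expression for $\bar{\alpha}$ contains $\mu$ is not a property of the prefix read so far. The paper's Theorem \ref{thm:avoidthm} resolves exactly this: word hyperbolicity gives the $k$-fellow-traveler property for $\Reduced(W)$, which makes the pair language $\{(\alpha,\beta)\mid \bar{\alpha}=\bar{\beta}\}$ regular; intersecting with $\Reduced(W)\times\Reduced_{\mu}(W)$ and projecting to the first factor then shows $\Reduced(X_{\mu})$ is regular. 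The one-sided case additionally needs the two-sided fellow-traveler property (biautomaticity) to show $\Reduced(w\cdot X)$ is regular when $\Reduced(X)$ is (Theorem \ref{thm:biaut}). This is where hyperbolicity enters essentially --- the argument fails for affine Weyl groups --- and it is absent from your sketch. Without it, even granting a correct conjectural description of the cells, your step (iii) does not go through.
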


Casselman's conjecture is known to be true for affine Weyl groups from
earlier work of one of us (PG) \cite{automata}.  In this paper we
investigate the case that $( W, S)$ is a Coxeter group corresponding
to a hyperbolic polygon.  In other words, $W$ can be realized as the
discrete subgroup of isometries of the hyperbolic plane $\fH$
generated by the reflections through the side of a geodesic polygon.
The cells of such groups have been considered earlier by B\'edard
\cite{bedard1, bedard2} and one of us (MB) \cite{misha}. We state
conjectures due to two of us (MB and PG) that describes the
Kazhdan--Lusztig cells of $W$ in terms of reduced expressions.  Then
we prove (assuming the conjectures) that for any left, right, or
2-sided Kazhdan--Lusztig cell $C$, the language $\Reduced (C)$ is
regular.  Moreover, when combined with previous work of two of us (MB
and PG), the results in this paper prove the regularity of cells for
certain Coxeter groups (cf.~Remark \ref{rem:otherstuff}). We note that
the proofs in this paper use word-hyperbolicity of $W$ in an essential
way, and in particular do not apply to affine Weyl groups.

We now give an overview of the paper.  In \S \ref{s:defs} we give
background on Coxeter groups and recall the definition of
Kazhdan--Lusztig cells. Section \ref{s:conjs} states conjectures for
cells in Coxeter groups attached to tessellations of the hyperbolic
plane by polygons.  In \S\ref{s:wordhyp} we give background on
word hyperbolic groups and state the results we need from geometric
group theory.  Finally \S \ref{s:mainresults} gives our main results.

\section{Definitions and basic examples}\label{s:defs}

In this section we recall the basics of Coxeter groups and define
Kazhdan--Lusztig cells.  For more details we refer to
\cite{humph.book, bb, kl}.  

A \emph{Coxeter group} $W$ is a group generated by a finite subset
$S\subset W$ where the defining relations have the form $(st)^{m
(s,t)}=1$ for pairs of generators $s,t\in S$.  The exponents $m(s,t)$
are taken from $\NN\cup \{\infty \}$, and we require $m(s,s) = 1$, so
that each generator $s$ is an involution.  Let $I\subset S$ be a
subset of the generators.  The subgroup of $W$ generated by $I$ is
called a parabolic subgroup and is denoted $W_{I}$.

Any representation of $w\in W$ as a product of generators is called an
\emph{expression}.  An expression is called \emph{reduced} if it
cannot be made shorter by applying the defining relations of $W$.  The
length of a shortest expression for $w$ is denoted $l (w)$. For any
$w\in W$, we define the \emph{left descent set} $\sL (w)\subset S$ to
consist of those $s\in S$ such that $l (sw)< l (w)$.  We similarly
define the \emph{right descent set} $\sR(w)$ to be those $s$ such that
$l (ws)<l (w)$.  For $w,u,v\in W$, we write $w=u.v$ if $w=uv$ and $l
(w) = l (u)+l (v)$.

Given an expression $s_{1}\dotsb s_{N}$, a \emph{subexpression} is a
(possibly empty) expression of the form $s_{i_{1}}\dotsb s_{i_{M}}$,
where $1\leq i_{1}<\dotsb <i_{M}\leq N$.  The \emph{Chevalley--Bruhat
order} is the partial order on $W$ defined by putting $v\leq w$ if an
expression for $v$ appears as a subexpression of a reduced expression
for $w$.  Given any $v,w\in W$, let $[v,w]$ be the interval between
$v$ and $w$, that is $[v,w]=\{ x\in W\mid v\leq x\leq w\}$.

The Kazhdan--Lusztig polynomials are most easily defined in terms of
an auxiliary family of polynomials, the \emph{$R$-polynomials}.  
This family $\{R_{v,w} (q)\in \Z [q]\mid v,w \in
W \}$ is defined to be the unique collection of polynomials satisfying
the following properties (cf.\cite[Theorem 5.1.1]{bb}): (i)
$R_{v,w}(q)=0$ if $v\not \leq w$; (ii) $R_{v,w} (q) =1$ if $v=w$; and
(iii) if $s\in\sR (w)$, then $R_{v,w} (q)= R_{vs,ws} (q)$ if $s\in \sR
(v)$, and is $qR_{vs,ws} (q) + (q-1) R_{v,ws} (q)$ otherwise.  Given
the $R$-polynomials, 
the \emph{Kazhdan--Lusztig polynomials} $P_{v,w} (q)$ can be described as the
unique family of polynomials satisfying (cf.\cite[Theorem 5.1.4]{bb})
(i) $P_{v,w} (q)=0$ if $v\not \leq w$; (ii) $P_{v,w} (q)=1$ if $v=w$;
(iii) $\deg P_{v,w} (q)\leq (l (w)-l (v)-1)/2$ if $v<w$; and
(iv) $q^{l (w)-l (v)}P_{v,w} (q^{-1}) = \sum_{x\in [v,w]}
R_{v,x} (q)P_{x,w} (q)$ if $v\leq w$.  If $v<w$, we write $\mu (v,w)$
for the coefficient of $q^{(l (w)-l (v)-1)/2}$ in $P_{v,w} (q)$.
We write $\join{v}{w}$ and $\join{w}{v}$ if $\mu (v,w)\not =0$.

We are finally ready to define cells.  The \emph{left $W$-graph}
$\Gamma_{\sL}$ of $W$ is the directed graph with vertex set $W$, and
with an arrow from $v$ to $w$ if and only if $\join{v}{w}$ and $\sL
(v)\not\subset \sL (w)$.  The left cells are extracted from
$\Gamma_{\sL}$ as follows.  Given any directed graph, we say two vertices
are in the same \emph{strong connected component} if there exist
directed paths from each vertex to the other.  Then the \emph{left
cells} of $W$ are exactly the strong connected components of the graph
$\Gamma_{\sL}$.  The \emph{right cells} are defined using the
analogously constructed \emph{right $W$-graph} $\Gamma_{\sR}$.  We say
$v,w$ are in the same \emph{two-sided cell} if we can find a sequence
$v=w_{1},w_{2},\dotsc ,w_{k}=w$ such that $w_{i}, w_{i+1}$ lie in
either the same left or right cell.

We need one final ingredient to state our conjecture in the next
section: the $a$-function.

Let $\He$ denote the Hecke algebra of $W$ over the ring $\cA =
\Z[q^{1/2}, q^{-1/2}]$ of Laurent polynomials in $q^{1/2}$. This
algebra is a free $\cA$-module with a basis $\sT = \{ T_w \mid w\in
W\}$ and with multiplication determined by $ T_wT_{w'} = T_{ww'}$ if
$l (ww') = l (w)+l (w')$, and $T_s^2 = q + (q-1)T_s $ for
$s\in S$.  Together with the basis $\sT$, we can define in $\He$
another basis $\sC = \{ C_w \mid {w\in W}\}$.  The element $C_w\in
\sC$ can be expressed in terms of $\sT$ and the Kazhdan--Lusztig
polynomials by
$$C_w = \sum_{y\le w}(-1)^{l (w)-l (y)}q^{l (w)/2-l (y)}
P_{y,w}(q^{-1})T_y.$$ 

Now consider the multiplication of the $\sC$-basis elements in
$\He$. We can write
$$C_x C_y = \sum_z h_{x,y,z} C_z,\ h_{x,y,z}\in\cA.$$ Let $a(z)$ be
the smallest integer such that $q^{a(z)/2}h_{x,y,z} \in \cA^+$ for all
$x, y \in W$, where $\cA^+ = \Z[q^{1/2}]$.  It is a standard
conjecture that $\{a (w)\mid w\in W \}\subset \Z$ is bounded for any
Coxeter group.  The $a$-function was introduced by Lusztig in
\cite{lusztig}, where he proved this conjecture for affine Weyl
groups. In \cite{misha} it was shown that the $a$-function is bounded for
right-angled Coxeter groups.  N.~Xi recently showed that the
$a$-function is bounded for Coxeter groups with complete Coxeter
graphs (i.e.~no two generators commute) \cite{xi}; this paper has
further ramifications for our current article, see Theorem
\ref{thm:xi}.  P.~Zhou has recently proved that the $a$-function is
bounded if $W$ has rank $3$ \cite{zhou}.

\section{Conjectures about cells of hyperbolic polygon groups}\label{s:conjs}

In this paper we take $W$ to be a hyperbolic polygon group.  This
means the following.  Let $\fH$ be the hyperbolic plane, and let
$\Delta \subset \fH $ be an $n$-sided geodesic polygon with angles
$\alpha_{i} = \pi/a_{i}$, $i=1,\dotsc ,n$.  (We omit the conditions
the denominators $a_{i}$ satisfy to make $\Delta$ hyperbolic; we also
allow the angles to vanish, in which case the polygon has ideal
vertices.)  Label the sides of $\Delta$ by $\sigma_{1}, \dotsc ,
\sigma_{n}$, such that the angle $\alpha_{i}$ sits between the sides
$\sigma_{i}, \sigma_{i+1}$, and where the subscripts are taken mod $n$
as necessary.  Then the generating set $S$ of $W$ has $n$ elements
$s_{1},\dotsc s_{n}$, corresponding to the sides $\sigma_{i}$.  We put
$m (s_{i},s_{j}) = \infty$ unless $\sigma _{i}$ and $\sigma_{j}$ meet
at the angle $\alpha_{k} \not = 0$.  In the latter case we put $m
(s_{i},s_{j}) = a_{k}$.  

It is not hard to see that $W$ is isomorphic to the discrete subgroup
of isometries of $\fH$ generated by reflections in the lines through
the $\sigma_{i}$.  Thus there is an action of $W$ on $\fH$ by
reflections, the polygon $\Delta$ is a fundamental domain, and the
translates $\{ w\cdot \Delta \mid w\in W\}$ form a
tessellation of $\fH$ (note our convention that the reflection action
of $W$ on $\fH$ is a left action).  The correspondence $w\mapsto
w\cdot \Delta$ is a bijection between $W$ and the tiles in the
tessellation.  Using this we identify $W$ with the set of all tiles.

We can also use this identification to define certain subsets of $W$.
Recall that $\sL (w)$ denotes the set of left descents of an element
$w$.  Given any subset $T\subset S$, we let $W^{T}$ be the (possibly
empty) set of all $w\in W$ such that $\sL (w) = T$.  The tessellation
allows us to identify the sets $W^{T}$ as follows.  First,
$W^{\emptyset}$ consists of $\Delta$ itself.  Next, any edge of
$\Delta$ corresponds to a generator $s\in S$.  Extending this edge to
a line divides the plane $\fH$ into two half-spaces, one containing
$\Delta$ and one not.  The half-space $H_{s}$ not containing $\Delta$
contains all elements $w$ such that $s \in\sL (w)$.  Any (non-ideal)
vertex of $\Delta$ corresponds to an order 2 subset $T$ with
$W^{T}\not =\emptyset$.  Namely we have $W^{T} = H_{s}\cap H_{s'}$,
where $T=\{s,s'\}$ and $s,s'$ label the edges of $\Delta$ meeting this
vertex.  Finally, if $T,T'$ have order $2$ and $T\cap T' = \{s \}$ has
order $1$, then $W^{\{s \}}$ consists of $H_{s}\smallsetminus
W^{T}\cap W^{T'}$.  These give all subsets $T$ such that $W^{T}\not
=\emptyset$.

We call a subgroup $D\subset W$ \emph{finite dihedral} if $D$ is the
parabolic subgroup for an order $2$ subset $T$ with $W^{T}$ nonempty.
Let $\D$ be the set of finite dihedral subgroups and let $\T$ be the
set of order $2$ subsets indexing them.  Assume that the distinct
nonzero exponents are $e_{1}<e_{2}<\dotsb <e_{m}$, where $m\leq n$.
This means there are $m$ isomorphism classes of dihedral subgroups of
$W$.  We write $\D = \D_{1}\cup \dotsb \cup \D_m$, where $\D_{i}$ is
the set of finite dihedral subgroups of exponent $e_{i}$.  We also let
$\T = \T_{1}\cup \dotsb \cup \T_{m}$ be the corresponding partition of
$\T$. For any $D\in \D$, let $w_{D} \in D$ be the longest element.
For $i=1,\dotsc ,m$ let $W_{i} = \{w_{D}\mid D\in \D_{i} \} $ be the
sets of longest elements.  We also write $w_{T}$ for $w_{D}$ if $D =
\langle T\rangle$.

We are now ready to give a conjectural description of the two-sided
cells of $W$.  We define a sequence of subsets $C_{m},\dotsc ,C_{1}$
of $W$ as follows.  First, $C_{m}$ is defined by 
\[
w\in C_{m} \text{\ if and only if\ } w=u.w_{D}.v \text{\ for some\ } w_{D}\in W_{m}.
\]
In other words, $w$ is in $C_{m}$ if and only if there is some reduced
expression of $w$ that contains a reduced expression for $w_{D}$ as a
subword, where $D$ is finite dihedral subgroup of maximal
exponent $e_{m}$.  Next, for $i<m$ we define $C_{i}$ by
\begin{multline*}\label{eq:defofCi}
w\in C_{i} \text{\ if and only if\ }  w=u.w_{D}.v \text{\ for some\ } w_{D}\in W_{i},\\
\text{and\ } w\not =x.w_{D'}.y \text{\ for any\ } w_{D'}\in W_{k} \text{\ with\ } k>i.
\end{multline*}
Thus $w\in C_{i}$ if it has a reduced expression containing a
subword of $w_{D}$ with $D$ finite dihedral of exponent $e_{i}$,
and has \emph{no} reduced expression containing any $w_{D'}$ as a
subword with $D'$ finite dihedral of exponent $>e_{i}$.  We also
define subsets $C_{\text{id}} = \{\text{id} \}$ and
\[
C_{0} =
\{w\mid \text{$w$ has a unique reduced expression} \}.
\]
Let $\sC$ be the collection $\{C_{\text{id}},C_{0},\dotsc  ,
C_{m}\}$.

\begin{conjecture}\label{conj:2sidedcells}
\begin{enumerate}
\item The decomposition $\sC$ gives the partition of $W$ into two-sided cells.
\item On the two-sided cell $C_{i}$, the $a$-function equals the
length of any element of $W_{i}$.
\end{enumerate}
\end{conjecture}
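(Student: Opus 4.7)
The plan is to prove the two parts simultaneously. Since Lusztig's $a$-function is constant on two-sided cells and satisfies $a(w_I) = l(w_I)$ for the longest element of a finite parabolic subgroup $W_I$, part (2) will follow from part (1) together with the observation that each $w_D \in W_i$ already lies in $C_i$: the only reduced expressions of $w_D$ are the two alternating words of length $e_i$ in its two generators, so no $w_{D'}$ of strictly greater exponent can appear as a subword, and $l(w_D) = e_i$ gives the claimed value.

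For part (1), I would first argue that distinct members of $\sC$ lie in distinct two-sided cells by separating them with the $a$-function: $a = 0$ on $C_{\text{id}}$, $a = 1$ on $C_0$ (using that an element with a unique reduced expression admits no non-trivial braid move, so the $R$- and $P$-polynomial recursions force the relevant Hecke-algebra structure constants into the lowest non-identity cell), and $a = e_i$ on $C_i$ for $1 \leq i \leq m$ as above; since $e_1 < \dots < e_m$, these values are pairwise distinct. The substantive assertion is that each $C_i$ is itself contained in a single two-sided cell. Given $w = u.w_D.v$ with $D \in \D_i$, one direction follows from the standard Hecke-algebra product formula, which expands $C_u C_{w_D} C_v$ with $C_w$ appearing with coefficient one; this places the two-sided cell of $w$ no higher than that of $w_D$ in Lusztig's preorder. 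For the other direction I would apply a sequence of Kazhdan--Lusztig star operations, which preserve the two-sided cell: first on the left to strip the prefix $u$, then on the right to strip the suffix $v$, ultimately landing on $w_D$.

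The principal obstacle is controlling the star operations so that they remain inside $C_i$, i.e., so that no intermediate element acquires a reduced expression containing some $w_{D'}$ of exponent strictly greater than $e_i$; such an element would belong to a strictly larger two-sided cell (by the $a$-function computation above) and the chain of star operations could not be extended in the desired way. Here the geometric description of the sets $W^T$ as intersections of the half-spaces $H_s$ in the tessellation of $\fH$ is the essential input: this combinatorics should allow one to verify that when $w \in C_i$ there is always an admissible choice of star operation that respects the dihedral exponent bound. The required arguments are analogous to those developed in \cite{misha} for the right-angled case, and the word-hyperbolicity of hyperbolic polygon groups discussed in the next section should help discipline the bookkeeping on descent sets and intermediate reduced expressions that the star-operation analysis requires.
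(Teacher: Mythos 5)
You should first be aware that the paper does not prove this statement: it is stated and used purely as a \emph{conjecture}. The authors offer computational evidence (Example \ref{ex:237-2sided}) and confirmation in special cases obtained elsewhere --- Xi's theorem for crystallographic $W$ with no exponent equal to $2$ (Theorem \ref{thm:xi}), Lusztig's computations for the planar affine Weyl groups, the right-angled case from \cite{misha}, and the equal-angle crystallographic case from \cite{BG1} --- but the general case is open, and the paper's main theorems are explicitly conditional on it. So there is no proof in the paper to compare yours against; any complete argument would be new mathematics.

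As a proof, your proposal has two genuine gaps. First, it leans throughout on properties of the $a$-function --- boundedness, constancy on two-sided cells, and $a(w_I)=l(w_I)$ for the longest element of a finite parabolic --- which are themselves only conjectural for general Coxeter groups (they sit inside Lusztig's conjectures P1--P15); the paper flags even the boundedness of $a$ as a standard conjecture, known in rank $3$ by Zhou but not for hyperbolic polygon groups of higher rank. Your separation of the $C_i$ by $a$-values therefore rests on unproven foundations, and as written it is also circular: you derive $a=e_i$ on $C_i$ from part (1) and then invoke that value while proving part (1). Second, and more seriously, the step you yourself call ``the principal obstacle'' --- chaining star operations from an arbitrary $w\in C_i$ down to $w_D$ while never leaving $C_i$ --- is exactly the unresolved content of the conjecture. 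Moving within a cell by star operations requires specific descent-set configurations and, ultimately, nonvanishing $\mu$-coefficients; the paper's own footnote locates the difficulty precisely there, noting that one would need to prove that the $\mu$-coefficients predicted in \cite{BG1} are actually nonzero. Asserting that the half-space geometry and word-hyperbolicity ``should allow one to verify'' the required admissible moves restates the problem rather than solving it. The easy direction ($w=u.w_D.v$ implies $w\leq_{LR} w_D$) is standard and correct, but it only bounds the cell of $w$ from one side and does not show that $C_i$ is a single two-sided cell.
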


\begin{example}\label{ex:237-2sided}
To illustrate Conjecture \ref{conj:2sidedcells}, we consider the
hyperbolic triangle group
\[
W = W_{237} = \langle r, s, t \mid  r^{2} = s^{2}=t^{2} =
(rs)^{3} = (rt)^{2} = (st)^{7} = 1\rangle.
\]
There are three finite dihedral subgroups, of orders $4, 6, 14$,
corresponding to the exponents $2,3,7$. The longest words in these
subgroups have reduced expressions $rt$, $rsr$, and $stststs$.  One
knows from the theory of Coxeter groups that one can pass between any
two reduced expressions of a given element by applying the
substitutions $rt=tr$, $rsr=srs$, and $stststs=tststst$. 

Figure \ref{fig:237} shows the partition $\sC$ for $W$.  The simplest
subsets are the white and yellow triangles.  The white
triangle corresponds to the identity in $W$ and is the set
$C_{\text{id}}$.  The yellow triangles correspond to those elements of
$W$ with unique reduced expressions, such as $r$, $s$, $t$, $rs$, and
$srt$.  There are $27$ such elements in $W$, and together they form the
set $C_{0}$. 

The blue triangles are the set $C_{1}$.  They consist of elements of
$W$ with a reduced expression containing $rt$ as a subword, but
with no reduced expressions containing either $rsr$ or $stststs$ as
subwords.  One way to think about what this means is the
following.  Elements of $C_{1}$ do not have unique reduced
expressions, but they almost do: when moving between reduced
expressions for these elements, one only applies relations of the form
$rt =tr$, and never uses the other two relations $rsr=srs$ and
$stststs=tststst$.  Thus the blue triangles correspond to those
elements of $W$ with the simplest possible nonunique reduced
expressions.  Elements in a Coxeter group such that any two reduced
expressions are connected by those moves that exchange two adjacent
commuting generators are called \emph{fully commutative} in the
literature \cite{stembridge}; their special relationship with
Kazhdan--Lusztig cells was investigated by Green--Losonczy
\cite{green.fc} and Shi \cite{shi.fc}.

Next we have the green triangles, which form the set $C_{2}$.  These
are the elements with the next most complicated reduced expressions:
when rewriting a reduced expression for any of these elements, one
uses the relations $rt=tr$ and $rsr=srs$, but never the relation
$stststs=tststst$.  Equivalently, no element in the green set has a
reduced expression containing the subword $stststs$, and every
element has at least one reduced expression containing $rsr$ as a
subword.

Finally we come to the set $C_{3}$, which is made of the red
triangles.  Any element in the red set has at least one reduced
expression with $stststs$ as a subword.  

The computation of the two-sided cells of $W$ has unfortunately not
been carried out.  Nevertheless, the partition $\sC$ experimentally
agrees with the two-sided cells in the following sense.  One can
naively compute an approximation to the two-sided cells by computing
many Kazhdan--Lusztig polynomials and descent sets, and thus computing
approximations to the $W$-graphs $\Gamma_{\sL}$ and $\Gamma_{\sR}$;
the only limitations to improving these approximations are computer
time and memory.  Doing this one finds that Figure \ref{fig:237}
agrees with the partition into two-sided cells in a bounded region
around $C_{\text{id}}$.  Actually one can do significantly better than
the naive computation: techniques of \cite{BG1} lead to predictions of
nonzero $\mu$-values for pairs $v,w$ with $|l (w)-l (v)|$ arbitrarily
large.  That these are in fact nonzero can be checked in any given
example by computer, again with the only limitation being computing
resources.  This then leads to much better approximations to the
two-sided cells that continue to agree with the partition $\sC
$.\footnote{We remark that if one could prove that all our claimed
$\mu$-coefficients from \cite{BG1} were nonzero, one could then prove
that $\sC$ matches the decomposition into two-sided cells.}

\begin{figure}[htb]
\begin{center}
\includegraphics[scale=0.65]{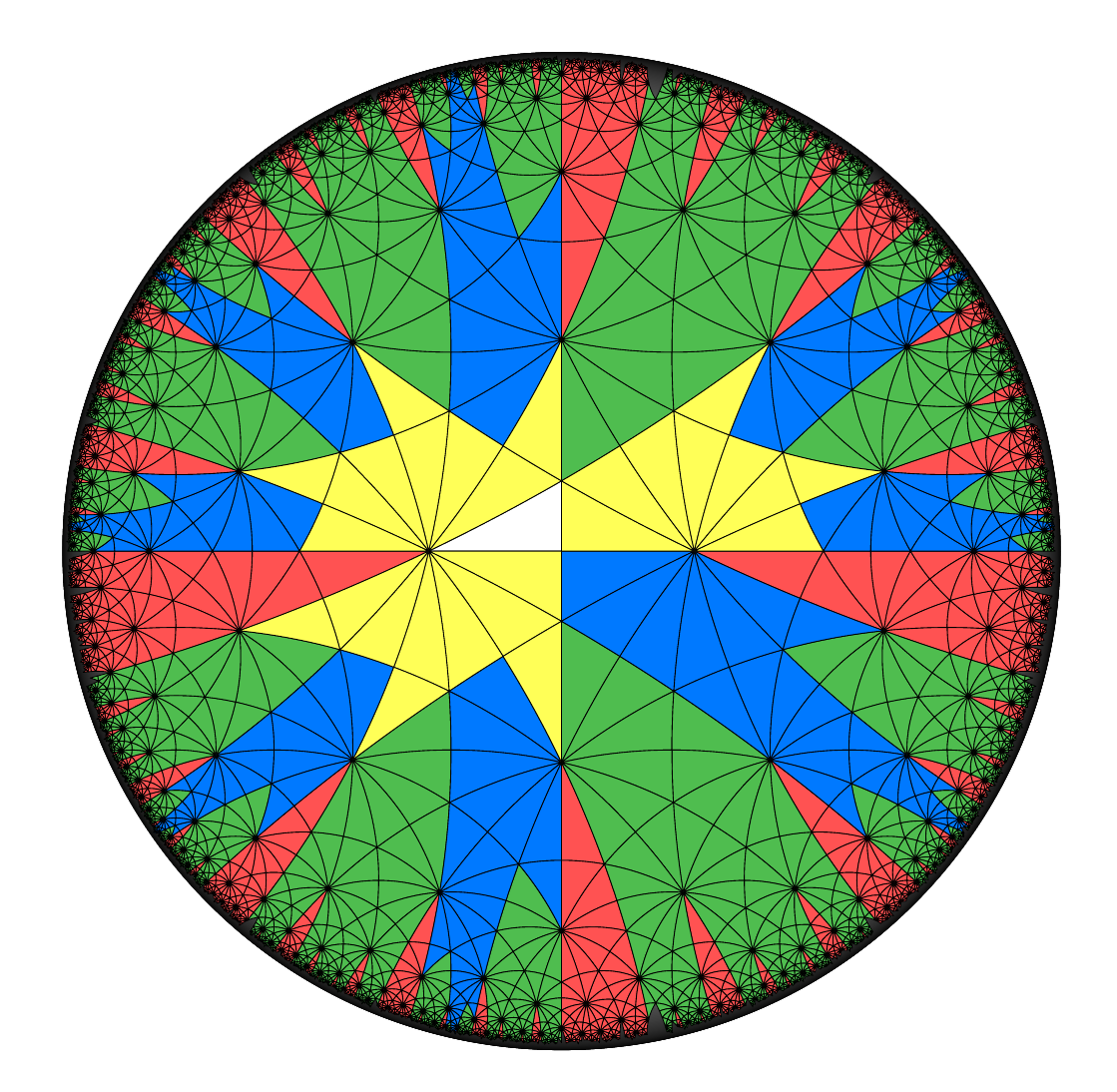}
\end{center}
\caption{\label{fig:237} The alcoves for the triangle group
$W=W_{237}$ and the partition $\sC$.  The sets $C_{\text{id}}, C_{0},
C_{1}, C_{2}, C_{3}$ are (respectively) the white triangle, the yellow
triangles, the blue triangles, the green triangles, and the red
triangles.}
\end{figure}
\end{example}

\begin{example}\label{2224-2sided}
For an examples where some exponents are equal, consider the Coxeter group $W'=W_{2224}$ generated by reflections in
the sides of the hyperbolic quadrilateral with angles $\pi /2, \pi /2,
\pi /2, \pi /4$.  This group has the presentation 
\[
W'=W_{2224} = \langle a, b, c, d \mid  a^{2} = b^{2}=c^{2} = d^{2} =
(ab)^{2} = (bc)^{2} = (cd)^{2} = (ad)^{4} = 1\rangle.
\]
There are four finite dihedral subgroups of orders $4$, $4$, $4$, and
$8$.  Since there are two distinct exponents, Conjecture
\ref{conj:2sidedcells} predicts that there are four two-sided cells.
These are the four sets 
\[
C_{\text{id}}, C_{0}, C_{1}, C_{2}
\]
shown in Figure \ref{fig:2224-2sided}; the colors are (respectively)
white, yellow, blue, and green.

\begin{figure}[htb]
\begin{center}
\includegraphics[scale=0.65]{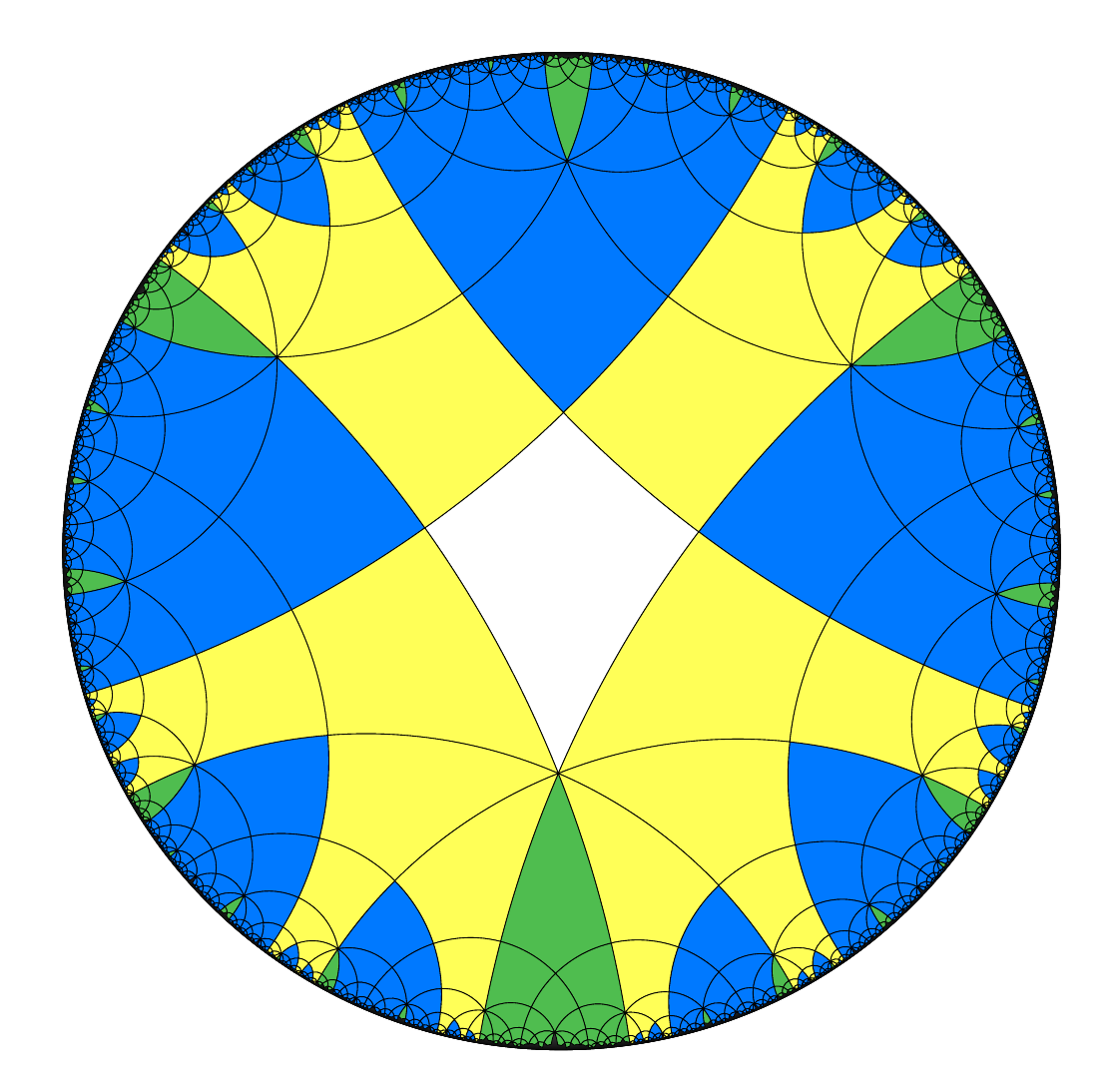}
\end{center}
\caption{The partition $\sC$ determined by Conjecture
\ref{conj:2sidedcells} for the Coxeter group 
$W_{2224}$\label{fig:2224-2sided}.}
\end{figure}
\end{example}

Returning now to the general discussion, we remark that both
$C_{\text{id}}$ and $C_{0}$ are known to be two-sided cells, the
former for trivial reasons and the latter from work of Lusztig
\cite[\S\S 3.7--3.8]{lusztig83}.  It also follows from computations of
Lusztig \cite{lusztig} that the two-sided cells of the planar affine
Weyl groups $\tilde{A}_{2}$, $\tilde{B}_{2}$, $\tilde{G}_{2}$ can be
described by Conjecture \ref{conj:2sidedcells}, even though they are
of course not hyperbolic.  We also have the following theorem of Xi,
which gives confirmation of Conjecture \ref{conj:2sidedcells} for
certain (not necessarily hyperbolic) $W$:

\begin{theorem}\label{thm:xi}\thmcite{xi}
Suppose $W$ is crystallographic and that no exponent of $W$ is $2$.
Then $\sC$ gives the partition of $W$ into two-sided cells. 
\end{theorem}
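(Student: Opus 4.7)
The plan is to leverage Xi's analysis \cite{xi} of the Hecke algebra for Coxeter groups whose Coxeter graph is complete---which is exactly the hypothesis ``no exponent is $2$,'' since $m(s,t)=2$ is equivalent to $s$ and $t$ commuting. Under the additional crystallographic hypothesis, the finite nontrivial exponents lie in $\{3,4,6\}$, so the finite dihedral subgroups are of types $I_2(3), I_2(4), I_2(6)$.

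First I would observe that under the completeness hypothesis, Matsumoto's theorem specializes: any two reduced expressions of $w\in W$ differ by a sequence of braid moves $\underbrace{st\cdots}_{m(s,t)} = \underbrace{ts\cdots}_{m(s,t)}$ with $m(s,t)\ge 3$, and no commutation moves are available. Consequently, the maximum exponent $e$ such that $w_D$ appears as a consecutive subword in some reduced expression of $w$ is a well-defined invariant of $w$, and the combinatorial partition $\sC$ is well-posed and easily read off from any single reduced expression.

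Next I would compute the Kazhdan--Lusztig $a$-function on each $C_i$, showing $a(w)=l(w_D)$ for $D\in\D_i$, $w\in C_i$. The lower bound is the easier direction: if $w=u.w_D.v$ with $D\in\D_i$, then expanding $C_u C_{w_D} C_v$ in the $\sC$-basis produces a structure coefficient $h_{\,\cdot\,,\,\cdot\,,\,w}$ forcing $a(w)\ge l(w_D)$. The upper bound is the technical heart of \cite{xi}: with no commutation moves available, the subword structure of reduced expressions rigidly controls the cancellation in products $C_xC_y$, and one shows that the degree of $q^{a(w)/2}h_{x,y,w}$ cannot exceed the length of the maximal dihedral subword of $w$.

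Finally I would match the $C_i$ to the two-sided cells using Lusztig's framework, now applicable since $a$ is bounded \cite{xi}: elements with equal $a$-value that lie in a common $\le_{LR}$-chain are in the same two-sided cell. The cases $C_{\text{id}}$ and $C_0$ are classical \cite{lusztig83}. For $i\ge 1$, two elements $w,w'\in C_i$ are connected in $\le_{LR}$ by extracting common factorizations through some $w_D$, $D\in\D_i$; intermediate elements remain in $C_i$ by the monotonicity of the maximal dihedral exponent along the chain. The main obstacle is the $a$-function upper bound: it is where the complete-graph hypothesis does essential work, since the fully commutative elements (which are absent here but appear in Example \ref{ex:237-2sided}) would otherwise disrupt the rigid correspondence between combinatorial subword structure and Hecke-algebraic degrees.
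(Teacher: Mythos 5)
The paper does not prove this theorem: it is stated as an imported result of Xi, with the citation \cite{xi} attached to the theorem header, and no proof environment follows it. So there is no internal argument to compare yours against; your proposal has to be judged as a free-standing proof, and as such it has genuine gaps. The two steps that carry all the weight are asserted rather than established. First, the upper bound $a(w)\le l(w_D)$ for $w\in C_i$, $D\in\D_i$ --- which you yourself label ``the technical heart of \cite{xi}'' --- is precisely the part that cannot be waved through; the sentence ``the subword structure rigidly controls the cancellation in products $C_xC_y$'' names no mechanism, and the crystallographic hypothesis (finite exponents in $\{3,4,6\}$), which you use only to enumerate the dihedral types, actually has to enter the degree estimates. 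Second, passing from ``$a$ is constant on $C_i$ and equals $l(w_D)$'' to ``$\sC$ is the partition into two-sided cells'' requires (a) enough of Lusztig's properties (monotonicity of $a$ along $\le_{LR}$, and the fact that elements in the same two-sided cell share the same $a$-value together with some converse), which are themselves theorems to be proved in this generality rather than a ``framework'' one may invoke, and (b) an explicit connectivity argument producing, for any $w,w'\in C_i$, a $\le_{LR}$-chain between them that stays inside $C_i$; the phrase ``extracting common factorizations through some $w_D$'' does not supply such a chain, particularly when $\D_i$ contains several distinct subgroups $D$ of the same exponent.

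Two smaller inaccuracies: membership in $C_i$ is not ``easily read off from any single reduced expression'' --- the definition quantifies over \emph{all} reduced expressions of $w$ (existence of one containing some $w_D$ with $D\in\D_i$, and nonexistence of any containing $w_{D'}$ of higher exponent) --- and the fully commutative elements are not ``absent'' under the complete-graph hypothesis; with no commutation moves available they are exactly the elements with a unique reduced expression, i.e.\ $C_{\mathrm{id}}\cup C_0$.
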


Next we turn to the one-sided cells.  Given any $T\in \T_{i}$, define 
\begin{equation}\label{eq:defofUT}
U^{T} = W^{T}\smallsetminus \bigcup_{j>i} C_{j} \cap W^{T}.
\end{equation}
In particular if $i=m$, we have $U^{T} = W^{T}$.  Let $\Omega_{T} =
\{w^{-1}w_{T}\mid w\in U^{T}\}$ and let $\Omega_{i} = \bigcup_{T\in
\T_{i}} \Omega_{T}$.  The one-sided cells will be built from the sets
$w\cdot U^{T}$, where $w\in \Omega_{T}$ and $T$ ranges over $\T$.  We
put a partial order on $\Omega_{i}$ by $w\preceq w'$ if $l (w)\leq l
(w')$ and there exists $T,T'\in \T_{i}$ such that $(w\cdot U^{T} ) \subseteq
(w'\cdot U^{T'})$. We define $\Omega_{i}^{\circ}$ to be
the minimal elements in $\Omega_{i}$ with respect to this partial
order, and we write $\Omega_{T}^{\circ }$ for the minimal elements of
$\Omega_{i}$ appearing in $\Omega_{T}$.

\begin{conjecture}\label{conj:1sidedcells}
The subsets $\{w\cdot U^{T} \mid w\in \Omega_{T}^{\circ}, T\in \T_{i}\}$ are the
right cells in $C_{i}$.
\end{conjecture}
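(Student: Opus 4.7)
The plan is, assuming Conjecture \ref{conj:2sidedcells}, to establish two containments for each two-sided cell $C_i$: (i) each candidate subset $w\cdot U^{T}$ lies in a single right cell of $C_i$, and (ii) elements of distinct candidate subsets lie in distinct right cells. A preliminary step is the purely combinatorial verification that the proposed subsets $\{w\cdot U^{T}:w\in\Omega_T^\circ,\,T\in\T_i\}$ actually partition $C_i$; this unpacks from the definitions of $U^{T}$, $\Omega_T$, and $\Omega_T^\circ$ together with the removal of $\bigcup_{j>i}C_j$ and induction on the index $i$.

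For (i), I would exploit the geometric picture: $w\cdot U^{T}$ is the translate of a ``cone'' of tiles in $\fH$ sharing a common corner vertex labelled by $T$. Given two elements $wu_1,wu_2\in w\cdot U^{T}$, the strategy is to link them by a chain of arrows in $\Gamma_\sR$ coming from right star operations based at the generators in $T$, using the fact that the dihedral parabolic $\langle T\rangle$ has an explicitly known right-cell and $\mu$-value structure. The excision of $\bigcup_{j>i}C_j$ in the definition of $U^{T}$ keeps the operations inside $C_i$, and the $\preceq$-minimality of $w\in\Omega_T^\circ$ prevents them from leaking into an overlapping translate $w'\cdot U^{T'}$. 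A reasonable approach is downward induction on the exponent index $i$, so that one may assume the one-sided cell structure on the higher $C_j$ already known.

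For (ii), I would combine Conjecture \ref{conj:2sidedcells}(2), which pins the $a$-function on $C_i$ to $l(w_T)$ for $T\in\T_i$, with the word-hyperbolicity of $W$. Distinct cones $w\cdot U^{T}$ and $w'\cdot U^{T'}$ inside $C_i$ correspond to geometrically disjoint wedges in $\fH$; an arrow $\join{v}{v'}$ across cones would force reduced expressions to fellow-travel for most of their length and then diverge past the common corner, a configuration that fellow-traveller estimates for word-hyperbolic groups should be able to rule out for elements sharing an $a$-value. The hard part will be this separation step: $\mu$-values in planar hyperbolic Coxeter groups are notoriously difficult to control directly, and turning the geometric intuition into a vanishing statement for $\mu(v,v')$ would likely require passing through (a portion of) Lusztig's conjectures P1--P15 on the asymptotic Hecke algebra, which in turn rests on boundedness of the $a$-function (available in rank $3$ by \cite{zhou} but not in general).
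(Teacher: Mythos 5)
This statement is a conjecture, and the paper does not prove it: Conjecture~\ref{conj:1sidedcells} is supported only by the pictures and computational evidence of Examples~\ref{ex:237-1sided} and~\ref{ex:2224-1-sided}, and by the special cases recorded in Remark~\ref{rem:otherstuff} (right-angled polygon groups via \cite{misha}, and equal-angle crystallographic polygons via \cite[\S 4]{BG1}), whose proofs live outside this paper. So there is no proof here for your argument to be measured against, and your proposal should be judged on whether it closes the gap on its own. It does not, and you say as much yourself.

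Concretely, both halves of your plan founder on the same obstruction. For (i), asserting that any two elements of $w\cdot U^{T}$ are joined by a chain of arrows in $\Gamma_{\sR}$ via ``right star operations based at $T$'' requires the nonvanishing of specific $\mu$-coefficients, including for pairs $v,v'$ with $l(v')-l(v)$ arbitrarily large; this is precisely the issue the paper flags in the footnote to Example~\ref{ex:237-2sided}, where it is noted that even the two-sided statement would follow \emph{if} the claimed $\mu$-coefficients from \cite{BG1} could be shown nonzero. For (ii), fellow-traveller estimates constrain the geometry of geodesics in $\Cay(W,S)$, but there is no mechanism in the paper (or in the literature it cites) converting geodesic divergence of two elements into $\mu(v,v')=0$; word-hyperbolicity is used in \S\ref{s:wordhyp}--\ref{s:mainresults} only to prove \emph{regularity of languages}, never to control Kazhdan--Lusztig polynomials. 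Appealing to Lusztig's P1--P15 imports statements that are themselves unproven in this setting. Finally, even your ``preliminary combinatorial step'' is not routine: the figures show that distinct translates $w\cdot U^{T}$ genuinely overlap (one contained in another), and verifying that the $\preceq$-minimal translates tile $C_{i}$ without overlap is part of what makes the conjecture a conjecture.
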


\begin{example}\label{ex:237-1sided}
Figures \ref{fig:a}--\ref{fig:b} illustrate Conjecture
\ref{conj:1sidedcells} for $W=W_{237}$.  First we consider the
two-sided cell $C_{3}$, which consists of all the red regions in
Figure \ref{fig:237}.  It is clear from Figure \ref{fig:237} that
$C_{3}$ is a disjoint union of geodesically convex regions in the
hyperbolic plane.  Further, a little experimentation suggests that
each connected component in $C_{3}$ can be taken to any other by a
sequence of reflections.  This is the main motivation behind
Conjecture \ref{conj:1sidedcells}, which uses finitely many one-sided
cells to generate all others.

Let $T\subset S$ be $\{s,t \}$.  The region $U^{T}$ from
\eqref{eq:defofUT} is the connected component of $C_{3}$ in Figure
\ref{fig:237} that shares a vertex with the identity triangle
$C_{\text{id}}$.  This is our initial one-sided cell; we build the
others by taking the reflected images $w^{-1}w_{T}\cdot U^{T}$, where
$w$ ranges over $U^{T}$; this set of prefixes $\{w^{-1}w_{T} \mid w
\in U^{T}\}$ is the set we denote $\Omega_{T}$.  Since there is only
one finite dihedral subgroup of exponent $7$, we have $\Omega_{3} =
\Omega_{T}$.

What happens is in depicted in Figures \ref{fig:a}--\ref{fig:b}.  The
red region in Figure \ref{fig:a}, together with the purple triangles
inside it, is the subset $U^{T}$.  Let $\Delta$ be the purple triangle
that shares a vertex with $C_{\text{id}}$, i.e.~the purple triangle at
the tip of the red region $U^{T}$.  Then all the purple triangles
correspond to $w\cdot \Delta$ as $w$ ranges over $\Omega_{3}$.

Figure \ref{fig:b} shows the regions $\{w\cdot U^{T} \mid w\in
\Omega_{3} \}$ (colored randomly).  What has happened is that the
original red region has been reflected so that its tip has been taken
to one of the purple triangles in Figure \ref{fig:a}.  Thus some
images meet others; indeed, if this happens then one translate of
$U^{T}$ is entirely contained in another.  The partial order that
determines $\Omega_{3}^{\circ}$ selects the reflections corresponding
to the purple triangles in Figure \ref{fig:a} lying at the tips of the
orange regions.  Flipping the original red region by the reflections
in $\Omega^{\circ}_{3}$ recovers all the red regions in Figure
\ref{fig:237}.

\begin{figure}[htb]
\begin{center}
\includegraphics[scale=0.65]{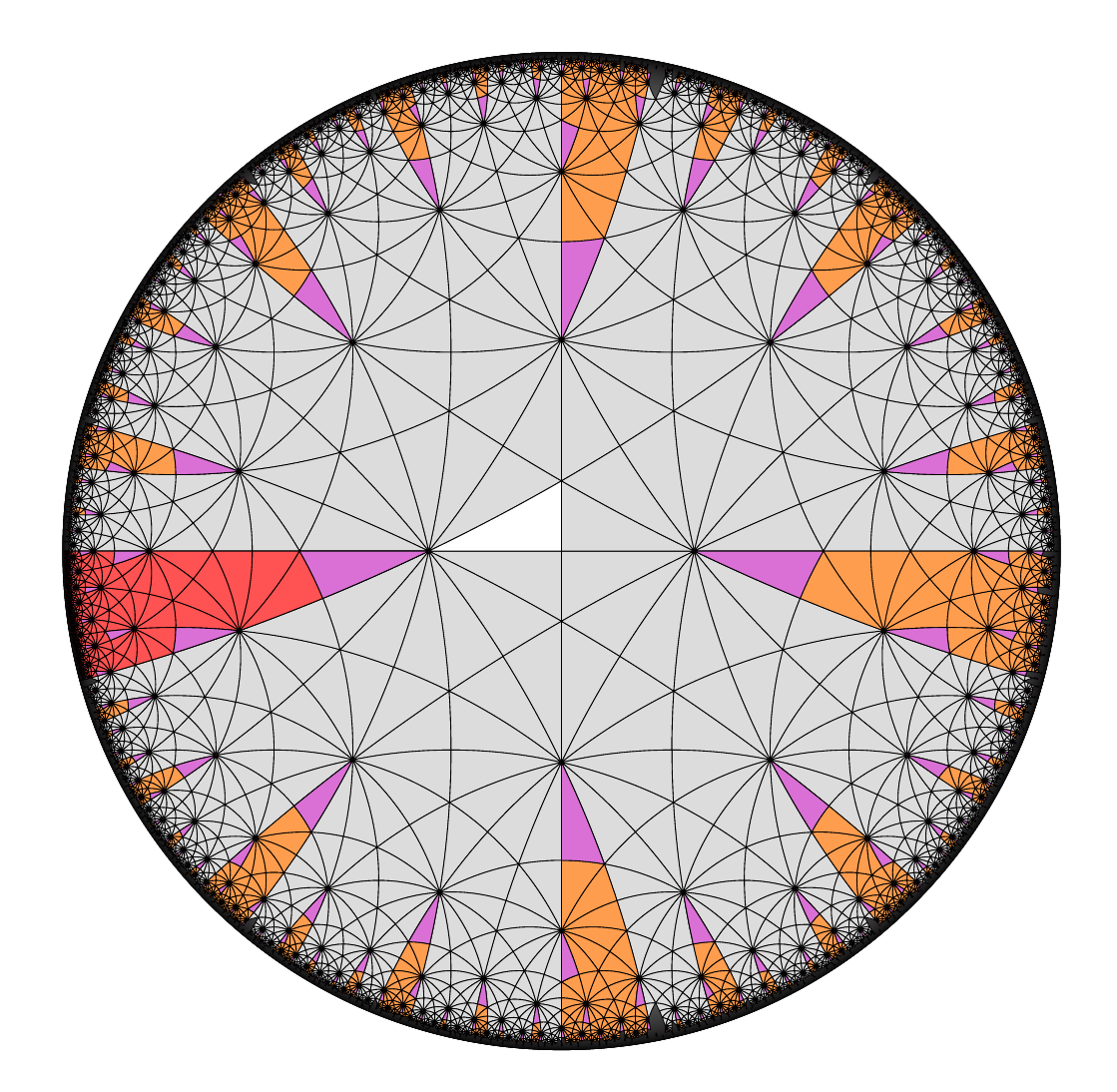}
\end{center}
\caption{The decomposition of the two-sided cell $C_{3}$ in $W_{237}$
into right cells.  We have $T = \{s,t \}$.  The red region, including
the purple trianges inside it, is $U^{T}$.  The purple triangles are
those of the form $w\cdot \Delta$, where $w\in \Omega_{3}$ and
$\Delta$ is the purple triangle at the tip of the red region.
\label{fig:a}}
\end{figure}

\begin{figure}[htb]
\begin{center}
\includegraphics[scale=0.65]{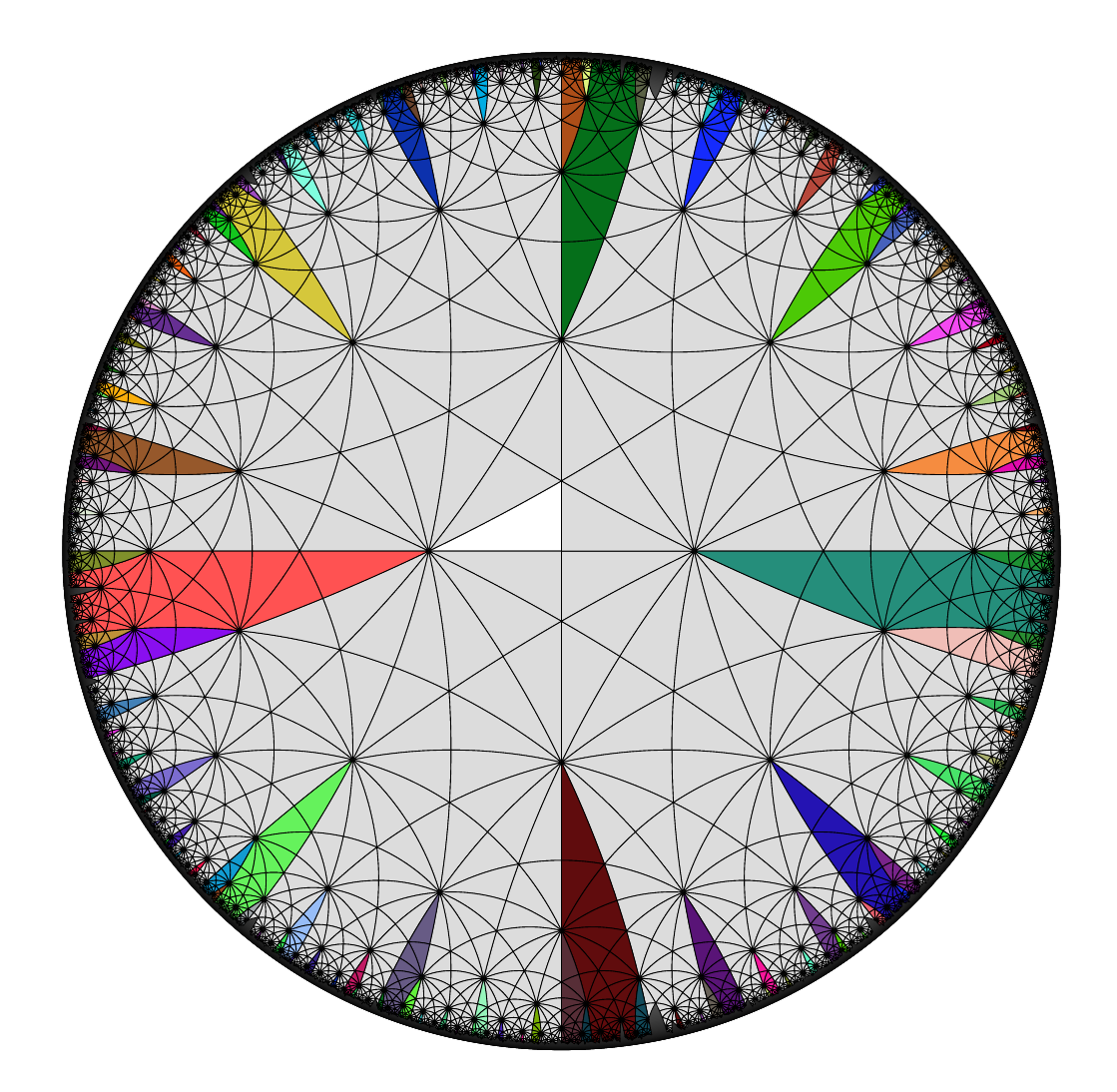}
\end{center}
\caption{The images $w\cdot U^{T}$, as $w$ ranges over $\Omega_{3}$.\label{fig:b}}
\end{figure}

\end{example}

\begin{example}\label{ex:2224-1-sided}

Now we consider the one-sided cells in $W'=W_{2224}$ that form the
blue two-sided cell $C_{1}$ in Figure \ref{fig:2224-2sided}.  This
time there are three relevant dihedral subgroups, since there are
three of exponent $2$: the subsets of the generators are $T = \{
a,b\}$, $T'=\{ b,c\}$, and $T'' = \{ c,d\}$.  We focus on $T$ and
$T''$, since $T'$ can be treated by symmetry.  The region $U^{T}$
(respectively, $U^{T''}$) is the blue region in Figure
\ref{fig:2224-2sided} immediately to the right (respectively, above)
the identity cell $C_{\text{id}}$.  Figures \ref{fig:12}--\ref{fig:23}
are the analogues of Figure \ref{fig:b}.  We see the original regions
(colored the same blue as in Figure \ref{fig:2224-2sided}) and all
their reflected images (colored randomly).  Comparing with Figure
\ref{fig:2224-2sided}, one observes that if one image meets another,
then one is entirely contained in the other, just as in Figure
\ref{fig:b}.

\begin{figure}[htb]
\begin{center}
\includegraphics[scale=0.65]{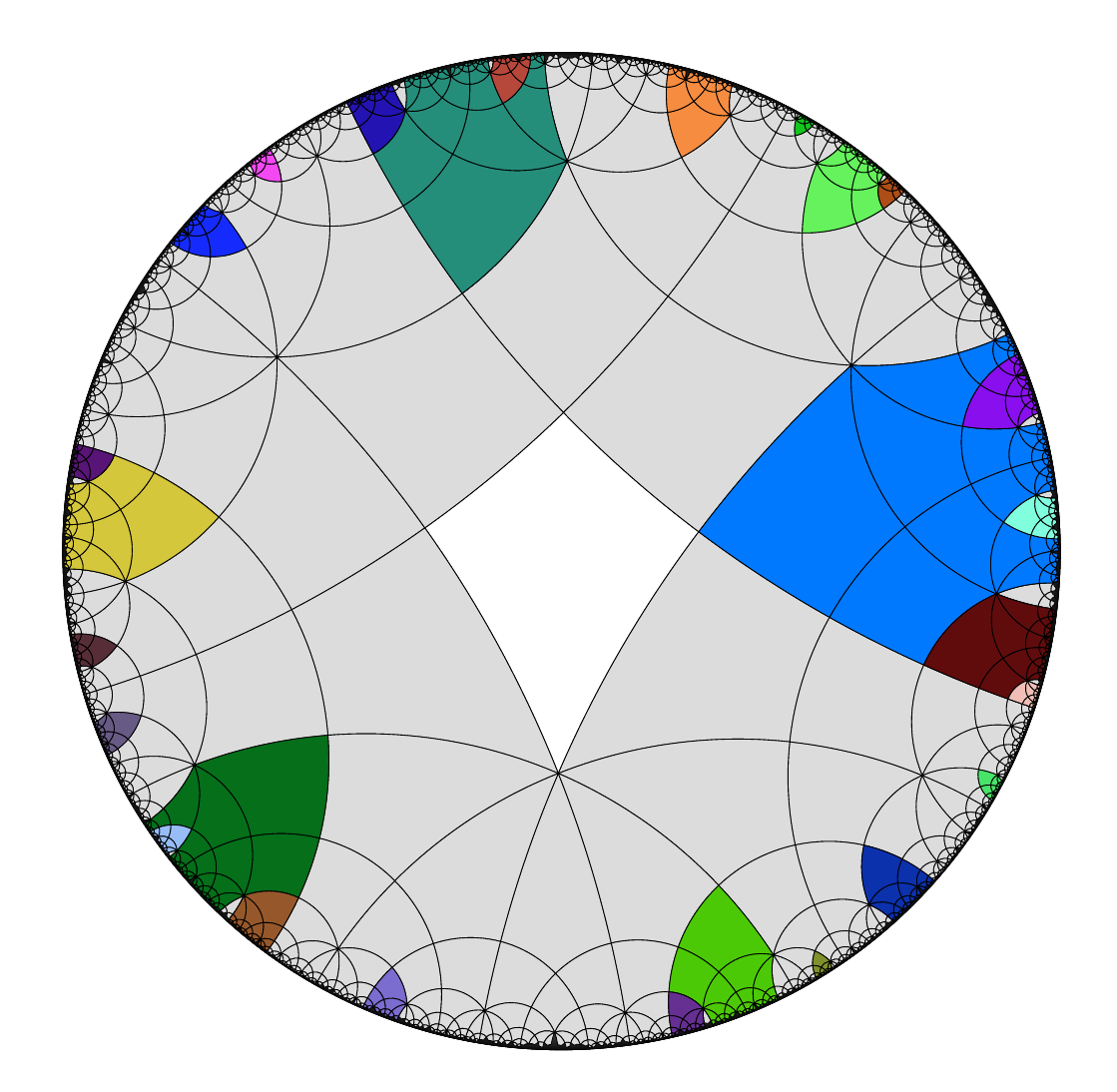}
\end{center}
\caption{The images $w\cdot U^{T}$ where $T=\{a,b \}$ and $w\in
\Omega_{T}$.\label{fig:12}}
\end{figure}

\begin{figure}[htb]
\begin{center}
\includegraphics[scale=0.65]{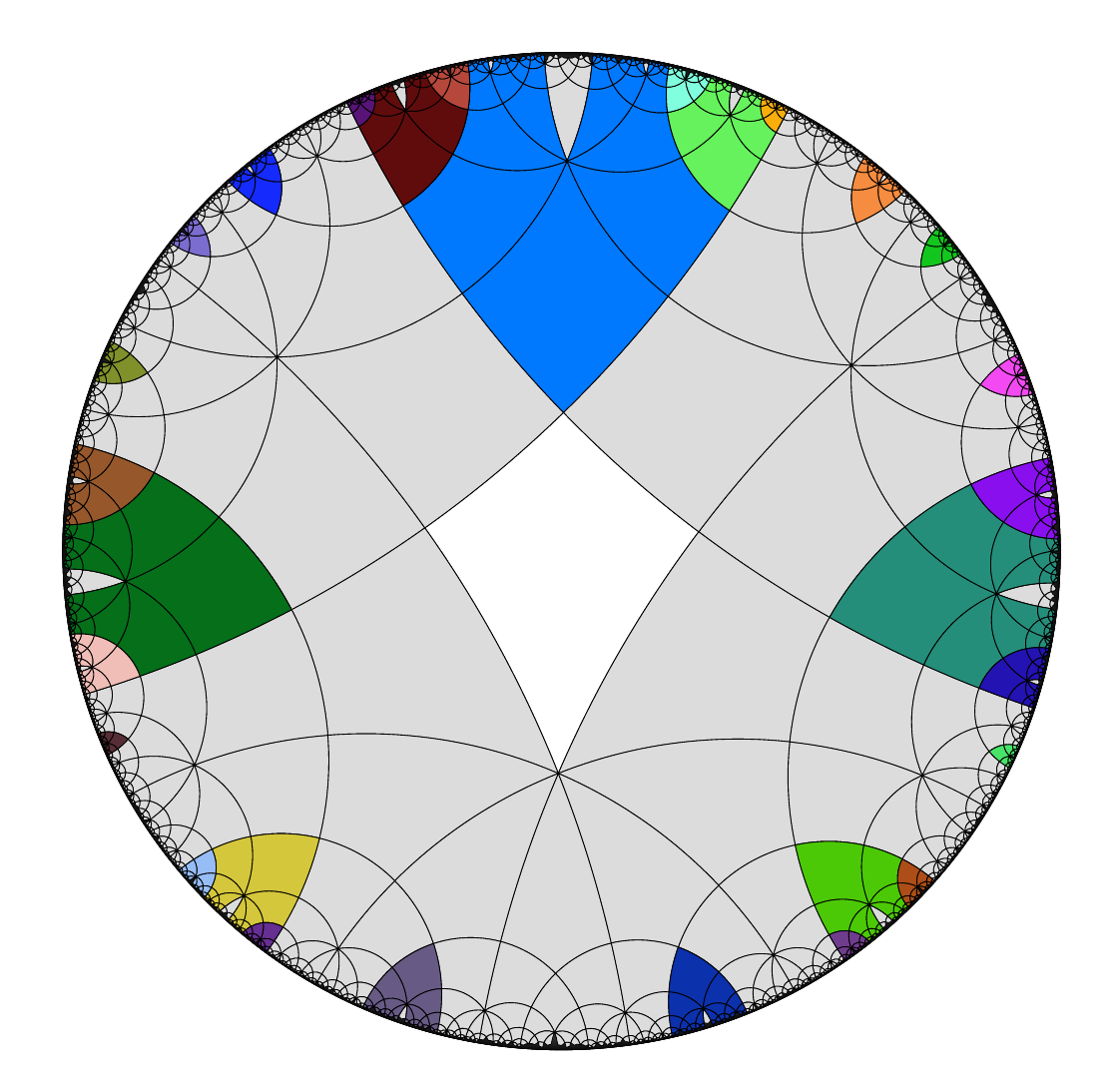}
\end{center}
\caption{The images $w\cdot U^{T''}$ where $T''=\{c,d \}$ and $w\in
\Omega_{T''}$.\label{fig:23}}
\end{figure}

\end{example}

Conjectures \ref{conj:1sidedcells} and \ref{conj:2sidedcells} should
be considered as a special case of conjectures from \cite{BG1} applied to
hyperbolic polygon groups.


\section{Word hyperbolic groups and automata}\label{s:wordhyp}

In this section we prove that certain languages in word hyperbolic
groups are regular.  We will then apply these results to the languages
$\Reduced(C)$ where $C$ is a Kazhdan--Lusztig cell in a hyperbolic polygon group.
First we define word hyperbolic groups and recall some of the
standard facts we shall need.  Details and additional properties can
be found, for example, in \cite{gromov, bridsonbook}.  

Let $(X,d)$ be a geodesic metric space.  A {\em geodesic triangle}
consists of $3$ points in $X$ together with geodesics joining each
pair of points.   A geodesic triangle is called {\em $\delta$-thin}
($\delta\in\R_{>0}$) if every side is in a $\delta$-neighborhood of
the other two sides.  The metric space $X$ is called {\em
$\delta$-hyperbolic} if  every geodesic triangle is $\delta$-thin.

Given a group $W$ and a generating set $S$, we let $\Cay(W,S)$ denote
the corresponding Cayley graph, which we regard as a geodesic metric
space by identifying each edge with a unit length interval.  Note that
the metric restricts to the {\em word metric} $d_{S}:W\times
W\rightarrow\Z_{\geq 0}$ on the vertices of the Cayley graph; that is,
for any $u,v\in W$ the distance $d_S(u,v)$ is the minimal length of a
geodesic from $u$ to $v$ in $\Cay(W,S)$.  We define the {\em length}
of an element $w\in W$ by $l(w)=d_S(1,w)$.

\begin{definition}
A finitely generated group $W$ is {\em word hyperbolic} if for some
(equivalently, any) finite generating set $S$, there exists a $\delta$ such
that $\Cay(W,S)$ is $\delta$-hyperbolic.
\end{definition}

It is known that a word hyperbolic group cannot contain a subgroup
isomorphic to $\Z\times\Z$.  For Coxeter groups, this condition is
also sufficient.

\begin{proposition}\rfthmcite{Corollary~12.6.3}{davisbook}
A Coxeter group $W$ is word hyperbolic if and only if it contains no
subgroup isomorphic to $\Z\times\Z$.
\end{proposition}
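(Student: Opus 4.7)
The plan is to prove the two directions separately. The forward direction (word hyperbolic $\Rightarrow$ no $\Z\times\Z$) is noted in the paragraph immediately preceding the statement: in any word hyperbolic group every abelian subgroup is virtually cyclic, which rules out $\Z\times\Z$. So the real content is the converse, and the approach is to appeal to Moussong's theorem via the Davis complex.

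For the converse I would first form the Davis complex $\Sigma = \Sigma(W,S)$: a contractible piecewise-Euclidean cell complex whose cells are Coxeter polytopes modelled on the finite parabolic subgroups of $W$, with $W$ acting properly, cocompactly and isometrically, and with the fundamental chamber as a strict fundamental domain. Moussong's theorem asserts that $\Sigma$ equipped with its natural piecewise-Euclidean metric is CAT(0); the proof proceeds by verifying the link condition at each vertex, which reduces to a delicate statement about the spherical Coxeter complexes attached to the spherical parabolic subgroups.

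Once $W$ is realized as a group acting geometrically on a CAT(0) space, the next step is to invoke the flat plane theorem of Bridson--Haefliger: a proper cocompact CAT(0) space on which a group acts geometrically is $\delta$-hyperbolic (and hence the group is word hyperbolic) if and only if the space contains no isometrically embedded Euclidean plane. This reduces the proposition to the equivalence: $\Sigma$ contains a flat plane if and only if $W$ contains a subgroup isomorphic to $\Z\times\Z$. The easy direction is the flat torus theorem, which produces an invariant flat plane from any $\Z\times\Z \subset W$ acting by semisimple isometries. The other direction uses cocompactness: a flat plane $F\subset \Sigma$ meets only finitely many $W$-orbits of cells, and a pigeonhole argument on translates of $F$ yields two commuting hyperbolic isometries with $F$ as a common axial plane, generating a $\Z\times\Z$ in $W$.

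The main obstacle is Moussong's CAT(0) theorem itself, which is a substantial piece of spherical Coxeter geometry and is where the specifically Coxeter hypotheses on $W$ are used. A secondary but nontrivial step is extracting a $\Z^2$-stabilizer from an arbitrary flat in $\Sigma$; this is a special case of a flat-closing property that one obtains because $\Sigma$ is a piecewise-Euclidean complex with finitely many isometry types of cells acted on cocompactly by $W$.
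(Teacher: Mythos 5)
The paper offers no proof of this proposition; it is quoted from Davis's book, where it is a corollary of Moussong's theorem. Your forward direction is fine, and your general setting (Davis complex $\Sigma$, Moussong's CAT(0) theorem, flat plane theorem) is the right one, but the decisive step of your converse is not a theorem in the generality you need. Deducing a subgroup $\Z\times\Z\leq W$ from an arbitrary isometrically embedded flat plane $F\subset\Sigma$ is precisely the \emph{flat closing problem}, which is open for general groups acting geometrically on CAT(0) spaces, including piecewise-Euclidean complexes with finitely many shapes of cells. Your ``pigeonhole argument on translates of $F$'' only produces elements $g\in W$ for which $gF$ and $F$ pass through congruent configurations of cells; it does not force $gF=F$, nor even $gF$ parallel to $F$, so it yields neither a periodic flat nor two commuting hyperbolic isometries. (Flat closing is known in some low-dimensional settings, e.g.\ for $2$-dimensional complexes by Ballmann--Brin, but the proposition concerns arbitrary Coxeter groups, whose Davis complexes have unbounded dimension.)

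The proof in the cited reference sidesteps this by using Coxeter combinatorics where you invoke flat closing. One first shows the purely algebraic equivalence: $W$ contains no $\Z\times\Z$ if and only if $(W,S)$ has no parabolic subgroup $W_T$ of Euclidean (affine) type of rank at least $3$ and no pair of disjoint subsets $T_1,T_2\subseteq S$ with $W_{T_1}$, $W_{T_2}$ both infinite and $W_{T_1\cup T_2}=W_{T_1}\times W_{T_2}$. The nontrivial implication here is easy in contrapositive form: an affine parabolic of rank $\geq 3$ contains $\Z\times\Z$ in its translation lattice, and a product of two infinite parabolics contains $\Z\times\Z$ because every infinite Coxeter group has an element of infinite order. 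Moussong's hard theorem is then that under this combinatorial condition $\Sigma$ admits a piecewise-hyperbolic CAT($-1$) metric (via Moussong's lemma on ``large'' piecewise-spherical links), so the geometric action of $W$ on $\Sigma$ makes $W$ word hyperbolic with no appeal to the flat plane theorem at all. To repair your argument you would either have to follow this route or prove flat closing for Davis complexes, which is not supplied by the cocompactness and finitely-many-shapes observations you give.
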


In particular, if a Coxeter group $W$ is a lattice in the isometry
group of $\fH$ (for example, a hyperbolic polygon group),
then $W$ is word hyperbolic.  

The key property of hyperbolic groups that we shall need is the
\emph{fellow-traveler property.}  For a group $W$ with generating set
$S$, we let $S^*$ denote the language of all words over the alphabet
$S$.  Any word $\alpha\in S^*$ determines a path in $\Cay(W,S)$ that
starts at the identity vertex $1\in W$.  We let $|\alpha|$ denote the
length of this path and $\bar{\alpha}\in W$ denote the terminal
vertex.  Keeping the terminology for Coxeter groups, we say that a
word $\alpha\in S^*$ is an {\em expression for $w\in W$} if
$\bar{\alpha}=w$.  (Note that $S=S^{-1}$, so that every element of $W$
is represented by some $\alpha\in S^*$, i.e.~so that the map
$\alpha\mapsto\bar{\alpha}$ from $S^*$ to $W$ is surjective.)  An
expression $\alpha$ for $w$ is a {\em reduced expression for $w$} if
the corresponding path in $\Cay(W,S)$ is a minimal length geodesic
between $1$ and $w$.  In other words, $\alpha$ satisfies
$|\alpha|=d_S(1,w)=l(w)$ and $\bar{\alpha}=w$; in particular, there is
no conflict between the use of the notation $l (w)$ to mean distance
from $1$ to $w$ in $\Cay(W,S)$ and to mean the length of a reduced
expression for $w$.

A subset $\ttL\subseteq S^*$ is called a {\em normal form} for $W$ if
the map $\alpha\mapsto\bar{\alpha}$ from $\ttL$ to $W$ is surjective.
The normal form we are interested in most is the geodesic normal form,
denoted by $\Reduced(W)$, consisting of all reduced expressions for all
elements in $W$.  More generally, for any subset $X\subseteq W$, we
define $\Reduced(X)$ to be the set of all reduced expressions for
elements of $X$.  

Two words $\alpha,\beta\in S^*$ with $|\alpha|\leq|\beta|$,
can be written uniquely as $\alpha=s_1s_2\cdots s_n$ and $\beta=t_1t_2\cdots
t_{n+p}$ where $s_i,t_j\in S$.  We say that $\alpha$ and $\beta$ are
{\em (synchronous) $k$-fellow-travelers} if 
$d_S(\bar{s_1\cdots s_i},\bar{t_1\cdots t_i})\leq k$ for all
$i=1,\ldots,n$ and $d_S(\bar{s_1\cdots s_n},\bar{t_1\cdots
t_{n+i}})\leq k$ for $i=1,\ldots,p$.  In other words, the corresponding
paths for $\alpha$ and $\beta$ in the Cayley graph are never more
than $k$-apart.  

\begin{definition}
Given a group $W$ with generating set $S$, a normal form
$\ttL\subseteq S^*$ is said to have the {\em fellow-traveler
property} (respectively, {\em two-sided fellow-traveler property}) if
there exists a $k>0$ such that for any $\alpha,\beta\in\ttL$ with
$\bar{\alpha}=\bar{\beta t}$ for some $t\in S\cup\{1\}$ (respectively,
$\bar{\alpha}=\bar{s\alpha t}$ for some $s,t\in S\cup\{1\}$), the
words $\alpha$ and $\beta$ are $k$-fellow-travelers.
\end{definition}  

\begin{remark}
The (two-sided) fellow-traveler property for a normal form $\ttL$ is
known to be equivalent to $W$ having an {\em automatic structure}
(resp., {\em biautomatic structure}) with respect to $\ttL$ in the
sense of \cite{epsteinbook}.  In particular, such a normal form must
be recognized by a finite-state automaton, hence is a regular
language.  Obviously, biautomatic implies automatic.
\end{remark}

The key fact we shall need is that word hyperbolic groups are
biautomatic {\em with respect to the geodesic normal form}.  

\begin{proposition} If $W$ is word hyperbolic, and $S$ is any finite
generating set, then $\Reduced(W)$ has the two-sided fellow-traveler
property.
\end{proposition}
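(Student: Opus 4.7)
Fix $\delta$ so that $\Cay(W,S)$ is $\delta$-hyperbolic. Suppose $\alpha,\beta\in\Reduced(W)$ satisfy $\bar{\alpha}=\overline{s\beta t}$ for some $s,t\in S\cup\{1\}$; set $u=\bar{\alpha}$, $v=\bar{\beta}$, $n=|\alpha|$, $m=|\beta|$, so that $|n-m|\leq 2$ by the triangle inequality. View $\alpha$ as a geodesic from $1$ to $u$, and the left translate $s\cdot\beta$ (a geodesic, since $W$ acts on $\Cay(W,S)$ by isometries) as a geodesic from $s$ to $sv=ut^{-1}$. The two pairs of corresponding endpoints satisfy $d_S(1,s),\,d_S(u,ut^{-1})\leq 1$.

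First I would set up and exploit a thin geodesic quadrilateral. The four sides $\alpha$, $[u,ut^{-1}]$, $s\cdot\beta$ (reversed), and $[s,1]$ form a geodesic quadrilateral $Q$ in $\Cay(W,S)$, two of whose sides have length at most $1$. Splitting $Q$ along a diagonal into two geodesic triangles and applying $\delta$-thinness to each, one obtains that every point of $\alpha$ lies within $2\delta$ of the union of the other three sides, hence within $2\delta+1$ of the long side $s\cdot\beta$.

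The second step---and the main obstacle---is to upgrade this Hausdorff-type closeness to a \emph{synchronous} fellow-traveler bound, since a priori the $i$-th prefix of $\alpha$ could be near a very differently parameterized point of $s\cdot\beta$. Here I would invoke the standard stability-of-geodesics lemma for $\delta$-hyperbolic spaces (see, e.g., \cite{bridsonbook, epsteinbook}): there is a constant $k=k(\delta)$ such that any two geodesics whose respective starting and ending points are each within distance $1$ of each other are synchronous $k$-fellow travelers up to the minimum of their lengths. Applied to $\alpha$ and $s\cdot\beta$, this gives $d_S(\bar{s_1\cdots s_i},\,s\cdot\bar{t_1\cdots t_i})\leq k$ for $i\leq\min(n,m)$. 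Applying the isometry $s^{-1}$ converts this to $d_S(\bar{s_1\cdots s_i},\,\bar{t_1\cdots t_i})\leq k+1$, and for the (at most two) trailing indices of the longer word, the remaining vertices stay within $k+3$ of the far endpoint of the shorter, as required by the definition. The cited lemma---whose proof proceeds by iterating the thin-triangle inequality along the geodesics---is the content that is not already visible from the quadrilateral argument, and we apply it as a black box to obtain the desired uniform constant.
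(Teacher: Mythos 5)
Your route is genuinely different from the paper's. The paper disposes of the two-sided property in two lines by citing the equivalence (\cite[Definition~2.5.4 and Lemma~2.5.5]{epsteinbook}) that the two-sided fellow-traveler property holds if and only if both $\ttL$ and $\ttL^{-1}$ have the one-sided property, observing that the geodesic language is closed under inversion, and then citing the classical one-sided fellow-traveler property of geodesics in a hyperbolic group (\cite[Theorem~3.4.5]{epsteinbook}). You instead give a direct geometric argument. That is a reasonable trade: your proof exposes the actual mechanism (synchronous stability of geodesics with nearby endpoints in a $\delta$-hyperbolic space) at the cost of re-proving what the citations package up. Note also that your first step (the thin quadrilateral, giving only Hausdorff closeness) is entirely subsumed by the stability lemma you invoke in the second step and could be omitted.

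There is, however, a genuine error in your last step. From $d_S(\bar{s_1\cdots s_i},\,s\cdot\bar{t_1\cdots t_i})\leq k$ you cannot conclude $d_S(\bar{s_1\cdots s_i},\,\bar{t_1\cdots t_i})\leq k+1$ by ``applying the isometry $s^{-1}$'': left translation by $s^{-1}$ sends the pair $(\bar{s_1\cdots s_i},\,s\bar{t_1\cdots t_i})$ to $(s^{-1}\bar{s_1\cdots s_i},\,\bar{t_1\cdots t_i})$, which is not the pair you want, and the triangle-inequality route would require a bound on $d_S(sy,y)=l(y^{-1}sy)$, the length of a conjugate of $s$, which is unbounded in a hyperbolic group. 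Indeed the literal conclusion you are aiming for is false: in the free group on $a,b$ with $\beta=b^n$ and $\alpha$ a geodesic for $ab^n$, the $i$-th prefixes are at distance $2i$. The correct formulation of the two-sided property (the one intended here, following \cite{epsteinbook}) compares $\alpha$ with the \emph{translated} path $s\cdot\beta$ based at $s$ --- which is exactly what your second step already establishes. The fix is therefore to delete the final step, not to repair it.
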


\begin{proof}{} 
The two-sided fellow-traveler property is equivalent to both the
normal form and its inverse language having the (one-sided)
fellow-traveler property \cite[Definition~2.5.4 and
Lemma~2.5.5]{epsteinbook}.  Since the geodesic language is closed
under taking inverses, it is enough to show that $\Reduced(W)$
satisfies the fellow-traveler property, and this is well-known
\cite[Theorem~3.4.5]{epsteinbook}.
\end{proof} 

\section{Hyperbolic polygon cells and regular languages}\label{s:mainresults}

In this final section we prove our main results, Theorems
\ref{thm:avoidthm} and \ref{thm:biaut}.  The first uses only the
one-sided fellow traveler property, but the second requires the
stronger two-sided property.  We then combine them with Conjectures
\ref{conj:2sidedcells} and \ref{conj:1sidedcells} to deduce the
regularity of certain Kazhdan--Lusztig cells.

Given a group $W$ and finite generating set $S$, let $\mu$ be any
reduced word in $S^*$.  As above, we write $w=u.v$ in $W$ if $w=uv$
and $l(w)=l(u)+l(v)$.  We then define the subset $X_{\mu}$ of $W$ by
\[X_{\mu}=\{w\in W\;|\; w=u.\bar{\mu}.v\;\mbox{for some $u,v\in
W$}\}.\]
In other words, $X_{\mu}$ consists of all elements of $W$ that have
{\em some} reduced expression containing $\mu$ as a (consecutive)
subword.  The language $\Reduced(X_{\mu})$ therefore consists of all
reduced expressions that are equivalent to a reduced expression
containing $\mu$ as a subword. 

\begin{theorem}\label{thm:avoidthm}
Let $W$ be a word hyperbolic group, let $S$ be any finite generating set
$S$ satisfying $S=S^{-1}$, and let $\mu$ be any word in $\Reduced(W)$.
Then $\Reduced(X_{\mu})$ is a regular language. 
\end{theorem}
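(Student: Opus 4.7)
The plan is to construct a nondeterministic finite automaton $\cA_\mu$ over $S$ that recognizes $\Reduced(X_\mu)$; the theorem then follows by the subset construction together with closure of regular languages under intersection. As $\cA_\mu$ reads an input $\alpha\in S^*$ letter by letter, it simultaneously guesses, one letter at a time, a companion word $\beta\in S^*$. The intended invariant is that $\beta$ will end up as a reduced expression for $\bar\alpha$ of the form $\beta=\beta_1\mu\beta_2$ (a reduced concatenation). Since $w\in X_\mu$ precisely when some such $\beta$ exists, an automaton that maintains this invariant letter by letter and accepts only when a valid $\beta$ has been produced will recognize exactly $\Reduced(X_\mu)$.

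The finite state space is supplied by the one-sided fellow-traveler property established in the previous section: because $W$ is word hyperbolic, there is a constant $k$ such that any two reduced expressions for a common element of $W$ are $k$-fellow travelers. A state of $\cA_\mu$ is a quadruple $(g,j,q_\alpha,q_\beta)$, where $g\in B_k(1)\subset W$ is the running discrepancy $\bar{\alpha_i}^{-1}\bar{\beta_i}$ between the two paths at time $i$; $j\in\{0,1,\dotsc,|\mu|-1,*\}$ tracks progress through the occurrence of $\mu$ inside $\beta$ (with $0$ meaning ``$\mu$ not yet begun'' and $*$ meaning ``$\mu$ already completed''); and $q_\alpha,q_\beta$ are states of a DFA recognizing $\Reduced(W)$ applied respectively to $\alpha$ and to the guessed $\beta$. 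Each coordinate ranges over a finite set. On reading input letter $s$ the automaton nondeterministically picks the next letter $t$ of $\beta$: if $0<j\neq *$ then $t$ is forced to be the next letter of $\mu$; otherwise $t$ is free, and $j$ may simultaneously be advanced into the match. The updates are $g\mapsto s^{-1}gt$ together with the advances of $q_\alpha$ by $s$ and $q_\beta$ by $t$, and any branch producing $|s^{-1}gt|>k$ or rejection by either $\Reduced(W)$ DFA is pruned. The accepting configurations are those with $g=1$, $j=*$, and both $q_\alpha$ and $q_\beta$ accepting.

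Soundness is immediate: an accepting run exhibits a reduced $\beta$ that contains $\mu$ as a consecutive subword and satisfies $\bar\beta=\bar\alpha$, which certifies both $\alpha\in\Reduced(W)$ and $\bar\alpha\in X_\mu$. For completeness, given $\alpha\in\Reduced(W)$ with $\bar\alpha=w\in X_\mu$, pick any reduced expression for $w$ of the form $\beta=\beta_1\mu\beta_2$; since $\bar\alpha=\bar\beta$, the one-sided fellow-traveler property forces $\alpha$ and $\beta$ to be $k$-fellow travelers, which is exactly the condition needed for the run of $\cA_\mu$ that guesses this $\beta$ to remain inside the state space and terminate in an accepting configuration. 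The one step that genuinely requires care is this last one: one must use that the fellow-traveler constant $k$ is \emph{uniform} across all pairs of reduced expressions for a common element of $W$. This uniformity is precisely the content of word-hyperbolicity, and is what distinguishes the present argument from the affine Weyl group setting alluded to in the introduction.
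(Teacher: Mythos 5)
Your proof is correct and follows essentially the same route as the paper's: both exploit the uniform $k$-fellow-traveler property of the geodesic normal form to obtain a finite-state description of the pairs $(\alpha,\beta)$ of reduced expressions with $\bar{\alpha}=\bar{\beta}$, constrain $\beta$ to contain $\mu$ as a consecutive subword, and project onto the first coordinate. The paper carries this out by citing the standard automaton $M_{\epsilon}$ and the closure of regular languages under intersection and projection, while you unwind that machinery into an explicit nondeterministic automaton whose states record the word difference, the progress through $\mu$, and the states of the $\Reduced(W)$ automaton on both coordinates; the mathematical content is the same.
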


\begin{proof}{}
Since $W$ is word hyperbolic, $\Reduced(W)$ is a regular language.
The sublanguage $\Reduced_{\mu}(W)$ consisting of all reduced words
that contain $\mu$ as a subword (i.e., that match the regular
expression $.\!*\!\mu.*$) is also a regular language.

Now let $A$ be a finite state automaton accepting $\Reduced(W)$, and
let $k$ be a positive integer such that $\Reduced(W)$ has the
$k$-fellow-traveler property.  Let $N_k$ be the set of all reduced
expressions in $S^*$ with length $\leq k$.   Then the standard
automaton $M_{\epsilon}$ based on $(A,N_k)$ 
(see \cite[Definition~2.3.3]{epsteinbook}) accepts the language 
\[\ttL=\{(\alpha,\beta)\in\Reduced(W)^2\;|\; \mbox{
$\bar{\alpha}=\bar{\beta}$ and $\alpha$ and $\beta$ are $k$-fellow
travelers}\},\]
which is therefore regular.
But since $\Reduced(W)$ satisfies the $k$-fellow-traveler property,
this language consists precisely of pairs of reduced expressions
having the same image in $W$, i.e.,
\[\ttL=\{(\alpha,\beta)\in\Reduced_S(W)\times\Reduced_S(W)\;|\;\bar{\alpha}=\bar{\beta}\}.\]
The language $\Reduced(X_{\mu})$ is obtained by intersecting
$\ttL$ with the language
$\Reduced(W)\times\Reduced_{\mu}(W)$ and then projecting onto the first
factor.  By standard predicate calculus for regular languages (see,
e.g., \cite[Theorem~1.4.6]{epsteinbook}), the language
$\Reduced(X_{\mu})$ is therefore regular. 
\end{proof}

\begin{corollary}
Let $W$ be a hyperbolic polygon group, and let $C$ be a conjectural
two-sided cell in the decomposition $\sC$ of Conjecture
\ref{conj:2sidedcells}.  Then the language $\Reduced(C)$ is regular.
\end{corollary}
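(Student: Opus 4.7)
The plan is to express the language $\Reduced(C)$ of each conjectural two-sided cell $C\in\sC$ as a finite Boolean combination of $\Reduced(W)$ (regular by Brink--Howlett~\cite{bh}) and the languages $\Reduced(X_{\mu_D})$ furnished by Theorem~\ref{thm:avoidthm}. Here $D$ ranges over the set $\D$ of finite dihedral parabolic subgroups of $W$, which is finite since it is indexed by the finite set $\T$, and $\mu_D\in\Reduced(W)$ is any chosen reduced expression for the longest element $w_D\in D$. Because regular languages are closed under finite unions, intersections, and complements, the corollary will follow as soon as the Boolean expressions are written down.

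Since $X_{\mu_D}=\{w\mid w=u.w_D.v\}$ depends only on $w_D=\bar{\mu_D}$ and not on the chosen reduced expression, the definitions in Section~\ref{s:conjs} translate directly: for $1\leq i\leq m$,
\[
\Reduced(C_i)=\Bigl(\bigcup_{D\in\D_i}\Reduced(X_{\mu_D})\Bigr)\setminus\Bigl(\bigcup_{k>i}\;\bigcup_{D'\in\D_k}\Reduced(X_{\mu_{D'}})\Bigr),
\]
where the second union is empty when $i=m$. The trivial cell $C_{\text{id}}=\{\text{id}\}$ contributes only the language $\{\varepsilon\}$, which is obviously regular. The cell $C_0$ requires a small combinatorial input: by Matsumoto's theorem, any two reduced expressions for the same $w\in W$ are connected by a sequence of braid moves, each of which replaces a reduced expression of some $w_D$ by the other reduced expression of $w_D$. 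In a hyperbolic polygon group the pairs $\{s,t\}\subseteq S$ with $m(s,t)<\infty$ are exactly those indexing elements of $\T$, so every braid move involves the longest element of some $D\in\D$. Hence $w$ has a unique reduced expression if and only if $w\notin\bigcup_{D\in\D}X_{\mu_D}$, giving
\[
\Reduced(C_0)=\Reduced(W)\setminus\bigcup_{D\in\D}\Reduced(X_{\mu_D})
\]
(up to the single word $\varepsilon$, which does not affect regularity).

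Once these formulas are in place, the proof is purely formal: each $\Reduced(X_{\mu_D})$ is regular by Theorem~\ref{thm:avoidthm}, $\Reduced(W)$ is regular, all unions are finite, and the Boolean closure properties of regular languages finish the argument. All of the analytic content is absorbed into Theorem~\ref{thm:avoidthm}, where the word-hyperbolicity of $W$ enters essentially via the fellow-traveler property of $\Reduced(W)$. The main points requiring care in the corollary itself are therefore the correct bookkeeping for $\Reduced(C_i)$ and the identification of $C_0$ via Matsumoto's theorem; in particular, one must check that set-theoretic complementation at the level of $W$ translates cleanly to complementation within $\Reduced(W)$, which holds precisely because membership in $X_{\mu_D}$ is a property of the element $w$ rather than of a specific reduced expression.
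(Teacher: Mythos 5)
Your proof is correct and follows essentially the same route as the paper: write each $\Reduced(C_i)$ as a finite Boolean combination of the languages $\Reduced(X_{\mu_D})$ and $\Reduced(W)$, then invoke Theorem~\ref{thm:avoidthm} and closure of regular languages under finite unions, intersections, and complements. The only (welcome) addition is that you make explicit, via Matsumoto's theorem, why every non-identity element with a non-unique reduced expression lies in some $X_{\mu_D}$ --- a fact the paper's formula for $\Reduced(C_0)$ as a complement of the other cells uses implicitly.
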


\begin{proof} 
For each finite dihedral subgroup $D$, let $\mu_D$ be any reduced
expression for the longest element $w_D$.  Then 
\[\Reduced(C_m)=\bigcup_{\bar{\mu}_D\in W_m}\Reduced(X_{\mu_D}),\]
and for $1\leq i<m$,
\[\Reduced(C_i)=\bigcup_{\bar{\mu}_D\in W_i}\Reduced(X_{\mu_D})
\setminus\bigcup_{i<j\leq m}\Reduced(C_j).\]
Since these are all obtained using finite unions, complements, and
intersections of regular languages, they are regular.  For
$\Reduced(C_{\text{id}})$ and $\Reduced(C_0)$, we note that the former
is finite, and the latter is the complement of
$\Reduced(C_{\text{id}})\cup\Reduced(C_1)\cup\cdots\cup\Reduced(C_m)$
in $\Reduced(W)$.  It follows that both are regular as well.
\end{proof}

\begin{theorem}\label{thm:biaut}
Let $W$ be a word hyperbolic group, and let $S$ be any generating
set.  Suppose $X\subseteq W$ is such that $\Reduced(X)$ is a regular
language.  Then for any $w\in W$, the language $\Reduced(w\cdot X)$ is also
regular.
\end{theorem}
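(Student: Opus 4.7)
The plan is to reduce the theorem to the existence of a \emph{left multiplier automaton} for $w$, a standard ingredient in the theory of biautomatic groups, and then combine it with the hypothesis that $\Reduced(X)$ is regular. Since $W$ is word hyperbolic, the preceding proposition shows that $\Reduced(W)$ has the two-sided fellow-traveler property, so $(W,\Reduced(W))$ is a biautomatic structure.

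The first step is to rewrite $\Reduced(w\cdot X)$ as a projection of an intersection of regular languages. An element of $W$ lies in $w\cdot X$ if and only if it equals $w\bar{\alpha}$ for some $\alpha\in\Reduced(X)$, so
\[
\Reduced(w\cdot X)\;=\;\pi_{2}\bigl(L_{w}\,\cap\,(\Reduced(X)\times\Reduced(W))\bigr),
\]
where
\[
L_{w}\;=\;\{(\alpha,\beta)\in\Reduced(W)\times\Reduced(W)\mid \bar{\beta}=w\,\bar{\alpha}\}
\]
is viewed as a padded language on pairs, and $\pi_{2}$ is projection onto the second coordinate. Given $\Reduced(X)$ regular by hypothesis and $\Reduced(W)$ regular since $W$ is word hyperbolic, $\Reduced(X)\times\Reduced(W)$ is regular as a padded language, and standard closure properties of regular languages under intersection and projection (see \cite[Theorem~1.4.6]{epsteinbook}) will finish the proof once $L_{w}$ is shown to be regular.

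The second step, which is the main technical point, is to construct a finite-state automaton recognizing $L_{w}$. Write $w=s_{1}\dotsb s_{p}$ as a reduced expression. Iterating the two-sided fellow-traveler property $p$ times (inserting one generator of $w$ on the left per step and invoking the triangle inequality across the chain of intermediate reduced expressions) yields a constant $K=K(w)$ such that for any $\alpha\in\Reduced(W)$ and any reduced $\beta$ for $w\bar{\alpha}$, the padded prefixes satisfy
\[
d_{S}\bigl(\overline{\beta_{1}\dotsb\beta_{i}},\;w\cdot\overline{\alpha_{1}\dotsb\alpha_{i-p}}\bigr)\;\leq\;K
\]
for every $i$. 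The automaton for $L_{w}$ then reads padded pairs and its state records (i) the state of a fixed FSA for $\Reduced(W)$ on each of $\alpha$ and $\beta$, and (ii) the offset element $\overline{\beta_{1}\dotsb\beta_{i}}^{-1}\,w\,\overline{\alpha_{1}\dotsb\alpha_{i-p}}$, which lies in the finite ball of radius $K$ around the identity. Acceptance occurs when both FSAs are in accepting states and the offset is trivial, which encodes exactly $\bar{\beta}=w\bar{\alpha}$. This is precisely the biautomatic left-multiplier construction from \cite{epsteinbook}.

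The main obstacle is the offset-bounding estimate in the second step: it is exactly at this point that we need the two-sided fellow-traveler property rather than the one-sided version used in Theorem~\ref{thm:avoidthm}. The one-sided property would let us build right multiplier automata and so prove regularity of $\Reduced(X\cdot w)$, but left translation is not visible at the end of reduced words and therefore requires the full biautomatic hypothesis. Once $L_{w}$ is in hand the remainder of the argument is purely formal manipulation of regular languages.
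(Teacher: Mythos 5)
Your proof is correct and follows essentially the same route as the paper, which simply observes that the statement holds for any biautomatic normal form and cites \cite[Lemma~1.2]{NR}; you have filled in the standard left-multiplier-automaton argument behind that citation. The decomposition of $\Reduced(w\cdot X)$ as a projection of $L_w\cap(\Reduced(X)\times\Reduced(W))$ and the bounded word-difference estimate from iterating the two-sided fellow-traveler property are exactly the content of the cited lemma.
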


\begin{proof}
In fact, the theorem holds for any normal form on a group that
satisfies the two-sided fellow-traveler property (i.e., is
biautomatic).  The proof is fairly immediate from the definitions; a
reference is \cite[Lemma~1.2]{NR}.
\end{proof}

\begin{corollary}
Let $W$ be a hyperbolic polygon group and let $w\cdot U^T$ (for
$w\in\Omega_i^{\circ}$ and $T\in\sT_i$) be one of the conjectured
one-sided cells in $C_i$.  Then $\Reduced(w\cdot U^T)$ is regular. 
\end{corollary}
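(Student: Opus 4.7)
The plan is to reduce everything to the case $w = 1$: by Theorem \ref{thm:biaut}, once $\Reduced(U^T)$ is shown to be regular, $\Reduced(w\cdot U^T)$ is regular for every $w\in W$, in particular for $w\in\Omega_T^\circ$. So the whole task is to prove that $\Reduced(U^T)$ is regular.

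From the definition \eqref{eq:defofUT} and the trivial identities $\Reduced(X\cap Y)=\Reduced(X)\cap\Reduced(Y)$ and $\Reduced(X\setminus Y)=\Reduced(X)\setminus\Reduced(Y)$, I would write
\[
\Reduced(U^T) \;=\; \Reduced(W^T)\,\setminus\, \bigcup_{j>i}\bigl(\Reduced(C_j)\cap\Reduced(W^T)\bigr).
\]
The preceding corollary (on two-sided cells) already gives that each $\Reduced(C_j)$ is a regular language. Since the regular languages are closed under finite unions, intersections and complements, the regularity of $\Reduced(U^T)$ is reduced to the regularity of $\Reduced(W^T)$.

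For that, I would first produce, for each $s\in S$, a regular sublanguage $\Reduced_s(W)\subset\Reduced(W)$ consisting of those reduced expressions $\alpha$ such that $s\in\sL(\bar\alpha)$. The key observation is that $s\in\sL(\bar\alpha)$ if and only if there exists some $\beta\in\Reduced(W)$ beginning with the letter $s$ and satisfying $\bar\alpha=\bar\beta$. Using the pairing language $\ttL=\{(\alpha,\beta)\in\Reduced(W)^2\;|\;\bar\alpha=\bar\beta\}$ from the proof of Theorem \ref{thm:avoidthm} (which is regular precisely because of the two-sided fellow-traveler property), I would intersect $\ttL$ with the regular language $\Reduced(W)\times(s\cdot\Reduced(W))$ of pairs whose second coordinate begins with $s$, and then project onto the first factor. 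The standard predicate calculus for regular languages yields $\Reduced_s(W)$ as a regular language. Then
\[
\Reduced(W^T) \;=\; \Bigl(\bigcap_{s\in T}\Reduced_s(W)\Bigr)\,\cap\,\Bigl(\bigcap_{s\notin T}\bigl(\Reduced(W)\setminus\Reduced_s(W)\bigr)\Bigr),
\]
which is regular by Boolean closure.

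The main obstacle is precisely the regularity of $\Reduced(W^T)$, because membership in $W^T$ is a property of the entire equivalence class of reduced expressions of an element, not of an individual expression: a reduced expression $\alpha$ with $\bar\alpha\in W^T$ need not begin with any prescribed letter of $T$. The biautomatic structure on $W$ with respect to $\Reduced(W)$, encoded by the pairing language $\ttL$, is what allows class-level information such as the left descent set to be detected from any single representative; once this is granted the rest of the argument is a routine bookkeeping of Boolean operations on regular languages.
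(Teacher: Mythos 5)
Your proof is correct, and its outer skeleton matches the paper's: reduce to $w=1$ via Theorem \ref{thm:biaut}, then reduce $\Reduced(U^T)$ to $\Reduced(W^T)$ by Boolean operations using the already-established regularity of the $\Reduced(C_j)$. Where you genuinely diverge is in the key sublemma that $\Reduced(W^T)$ is regular. The paper gets this essentially for free from the canonical automaton $\cA_{\text{can}}$ of Brink--Howlett/Bj\"orner--Brenti: its states are the regions of the small-root hyperplane arrangement, the simple roots are small, so $W^T$ is a union of states and one simply restricts the accepting set. You instead detect left descents via the exchange-condition characterization ($s\in\sL(\bar\alpha)$ iff some reduced expression of $\bar\alpha$ begins with $s$), and extract $\Reduced_s(W)$ from the pairing language $\ttL$ by intersection and projection, exactly as in the proof of Theorem \ref{thm:avoidthm}; then $\Reduced(W^T)$ is a Boolean combination of the $\Reduced_s(W)$. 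Both arguments are valid. The paper's is shorter and works for arbitrary Coxeter groups (the canonical automaton needs no hyperbolicity), whereas yours is more self-contained relative to the machinery the paper has already built -- it reuses $\ttL$ rather than importing the small-roots construction -- and it correctly isolates the real issue, namely that membership in $W^T$ is a property of the whole fiber of reduced expressions rather than of a single word. Two small inaccuracies worth fixing: the regularity of $\ttL$ needs only the one-sided fellow-traveler property (the case $\bar\alpha=\bar\beta$ is the $t=1$ instance; the two-sided property is what Theorem \ref{thm:biaut} needs), and $s\cdot\Reduced(W)$ is not literally the set of reduced words beginning with $s$ -- you should intersect with $\Reduced(W)\times (s\, S^*)$ or note that the subsequent intersection with $\ttL$ repairs this. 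Neither affects the argument.
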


\begin{proof}
First, we claim that the language $\Reduced(W^T)$ is regular.  This is
easily seen using the \emph{canonical automaton} $\cA_{\text{can}}$
that accepts $\Reduced (W)$ \cite[Theorem 4.8.3]{bb}.  The states of
this automaton, all of which are accepting, are given by the regions
that are the connected components of the complement of the hyperplane
arrangement determined by the \emph{small roots} \cite[\S4.7]{bb}.
Since the simple roots are small, the subset of the tessellation of
$\fH$ corresponding to $W^{T}$ is given by a union of states of
$\cA_{\text{can}}$.  Hence we can make an automaton accepting
$\Reduced(W^T)$ by starting with $\cA_{\text{can}}$ and only making
certain states accepting.  Thus $\Reduced(W^T)$ is regular.  Since all of the
$\Reduced(C_j)$ are regular, it follows that $\Reduced (U^T)$ is regular.  Hence,
by Theorem~\ref{thm:biaut}, $\Reduced (w\cdot U^T)$ is also regular.
\end{proof}

\begin{remark}\label{rem:otherstuff}
Conjectures \ref{conj:1sidedcells} and \ref{conj:2sidedcells} are true
for right-angled polygon groups by \cite{misha}, and more generally
for polygons with equal angles that satisfy the crystallographic condition
by \cite[\S 4]{BG1}.  Thus we can apply the results of this section to
show regularity of the languages attached to the cells for those groups.
\end{remark}

\bibliographystyle{amsplain_initials_eprint}
\bibliography{avoid}
\end{document}